\numberwithin{equation}{section}
\newtheorem{thm}{Theorem}[section]
\newtheorem{lma}[thm]{Lemma}
\newtheorem{cor}[thm]{Corollary}
\newtheorem{prop}[thm]{Proposition}
\renewcommand{\geq}{\geqslant}
\renewcommand{\leq}{\leqslant}
\title{Inhomogeneous self-affine carpets}
\author{Jonathan M. Fraser\\ \\
\emph{Mathematics Institute, Zeeman Building,}\\ \emph{University of Warwick, Coventry CV4 7AL, UK}\\ \emph{e-mail: jon.fraser32@gmail.com}}
\begin{document}
\maketitle

\begin{abstract}
We investigate the dimension theory of inhomogeneous self-affine carpets.  Through the work of Olsen, Snigireva and Fraser, the dimension theory of inhomogeneous \emph{self-similar} sets is now relatively well-understood, however, almost no progress has been made concerning more general non-conformal inhomogeneous attractors.  If a dimension is countably stable, then the results are immediate and so we focus on the upper and lower box dimensions and compute these explicitly for large classes of inhomogeneous self-affine carpets.  Interestingly, we find that the `expected formula' for the upper box dimension can fail in the self-affine setting and we thus reveal new phenomena, not occurring in the simpler self-similar case.
\\ \\
\emph{Mathematics Subject Classification} 2010:  primary: 28A80, 26A18. \\ \\
\emph{Key words and phrases}: inhomogeneous attractor, self-affine carpet, box dimension.
\end{abstract}

\section{Introduction}

Inhomogeneous self-similar sets were introduced by Barnsley and Demko \cite{barndemko} as a natural generalisation of the standard iterated function system construction. As well as being interesting from a structural and geometric point of view, these sets have useful applications in image compression.  The dimension theory of inhomogeneous self-similar sets was investigated by Olsen and Snigireva \cite{olseninhom, ninaphd} and later by Fraser \cite{me_inhom}, revealing many subtle differences between the inhomogeneous attractors and their homogeneous counterparts.  Up until now, this investigation has only been concerned with self-similar constructions, where the arguments are considerably simplified because the scaling properties are very well understood.  In particular, as one zooms into the set, one sees the same picture as one started with, even in the inhomogeneous case.  In this paper we take on the more challenging problem of studying inhomogeneous self-affine sets, and, in particular, inhomogeneous versions of the self-affine carpets considered by Bedford-McMullen \cite{bedford, mcmullen}, Lalley-Gatzouras \cite{lalley-gatz} and Bara\'nski \cite{baranski}.

\subsection{Inhomogeneous attractors: structure and dimension}

Inhomogeneous iterated function systems are generalisations of standard iterated function systems (IFSs), which are one of the most important methods for constructing fractal sets.  Indeed, one might call the attractors of the standard systems \emph{homogeneous attractors}.  Let $(X,d)$ be a compact metric space.  An IFS is a finite collection $\{ S_i \}_{i \in \mathcal{I}}$ of contracting self-maps on $X$.  It is a fundamental result in fractal geometry, dating back to Hutchinson's seminal 1981 paper \cite{hutchinson}, that for every IFS there exists a unique non-empty compact set $F$, called the \emph{attractor}, which satisfies
\[
F = \bigcup_{i\in \mathcal{I}} S_i (F).
\]
This can be proved by an elegant application of Banach's contraction mapping theorem.  If an IFS consists solely of \emph{similarity} transformations, then the attractor is called a \emph{self-similar set}.  Likewise, if $X$ is a Euclidean space and the mappings are all translate linear (\emph{affine}) transformations, then the attractor is called \emph{self-affine}.
\\ \\
More generally, given a compact set $C \subseteq X$, called the \emph{condensation set},  analogous to the homogeneous case, there is a unique non-empty compact set $F_C$ satisfying
\begin{equation} \label{inhom}
F_C =  \bigcup_{i \in \mathcal{I}} S_i (F_C) \ \cup \ C,
\end{equation}
which we refer to as the \emph{inhomogeneous} attractor (with condensation $C$).  Note that homogeneous attractors are inhomogeneous attractors with condensation equal to the empty set.  From now on we will assume that the condensation set is non-empty.  Inhomogeneous attractors were introduced and studied in \cite{barndemko} and are also discussed in detail in \cite{superfractals} where, among other things, Barnsley gives applications of these schemes to image compression, see Figure 1.  Inhomogeneous attractors also have interesting applications in the realm of dimension theory and fractals: in certain cases homogeneous attractors with complicated overlaps can be viewed as inhomogeneous attractors without overlaps, making them considerably easier to study.  See for example \cite[Section 2.1]{olseninhom2}.  Let $\mathcal{I}^* = \bigcup_{k\geq1} \mathcal{I}^k$ denote the set of all finite sequences with entries in $\mathcal{I}$ and for
\[
\textbf{\emph{i}}= \big(i_1, i_2, \dots, i_k \big) \in \mathcal{I}^*
\]
write
\[
S_{\textbf{\emph{i}}} = S_{i_1} \circ S_{i_2} \circ \dots \circ S_{i_k}.
\]
The \emph{orbital set},
\[
\mathcal{O} \ = \ C \ \cup \  \bigcup_{\textbf{\emph{i}} \in \mathcal{I}^*} S_{\textbf{\emph{i}}} (C),
\]
 was introduced in \cite{superfractals} and it turns out that this set plays an important role in the structure of inhomogeneous attractors.  In particular,
\begin{equation} \label{structure}
F_C \ = \  F_\emptyset \cup \mathcal{O} \ = \  \overline{\mathcal{O}},
\end{equation}
where $F_\emptyset$ is the \emph{homogeneous} attractor of the IFS.  The relationship (\ref{structure}) was proved in \cite[Lemma 3.9]{ninaphd} in the case where $X$ is a compact subset of $\mathbb{R}^d$ and the maps are similarities.  We note here that their arguments easily generalise to obtain the general case stated above.  When considering the dimension $\dim$ of $F_C$, one expects the relationship
\begin{equation} \label{inhomexpected}
\dim F_C = \max \{ \dim  F_\emptyset, \ \dim C\}
\end{equation}
to hold.  Indeed, if $\dim$ is countably stable, monotone and does not increase under Lipschitz maps, then
\newpage
\begin{eqnarray*}
\max \{ \dim  F_\emptyset, \ \dim C\} \ \leq \ \dim F_C &=& \dim (F_\emptyset \cup \mathcal{O}) \\ \\
&=& \max \Big\{ \dim  F_\emptyset, \ C \ \cup \  \bigcup_{\textbf{\emph{i}} \in \mathcal{I}^*} S_{\textbf{\emph{i}}} (C) \Big\} \\ \\
&\leq& \max \{ \dim  F_\emptyset, \ \dim C\}
\end{eqnarray*}
and so the formula holds trivially.  Thus, studying the Hausdorff and packing dimensions of inhomogeneous attractors is equivalent to studying the Hausdorff and packing dimensions of the corresponding homogeneous attractor, and thus is not an interesting problem in its own right.  However, the upper and lower box dimensions are not countably stable and so computing these dimensions in the inhomogeneous case is interesting and (perhaps) challenging, although one may still expect, somewhat na\"ively, that the relationship (\ref{inhomexpected}) should hold for these dimensions.  Recall that the lower and upper box dimensions of a bounded set $F \subseteq \mathbb{R}^n$ are defined by
\[
\underline{\dim}_\text{B} F = \liminf_{\delta \to 0} \, \frac{\log N_\delta (F)}{-\log \delta}
\qquad
\text{and}
\qquad
\overline{\dim}_\text{B} F = \limsup_{\delta \to 0} \,  \frac{\log N_\delta (F)}{-\log \delta},
\]
respectively, where $N_\delta (F)$ is the smallest number of sets required for a $\delta$-cover of $F$.  If $\underline{\dim}_\text{B} F = \overline{\dim}_\text{B} F$, then we call the common value the box dimension of $F$ and denote it by $\dim_\text{B} F$.  It is useful to note that we can replace $N_\delta$ with a myriad of different definitions all based on covering or packing the set at scale $\delta$, see \cite[Section 3.1]{falconer}.
\\ \\
The upper box dimensions of inhomogeneous self-similar sets were studied by Olsen and Snigireva \cite{olseninhom, ninaphd} where they proved that the expected relationship (\ref{inhomexpected}) is satisfied for upper box dimension assuming some strong separation conditions.  This result was then generalised by Fraser \cite{me_inhom}, where he proved the following bounds without assuming any separation conditions.  For any inhomogeneous self-similar set $F_C$, we have
\[
\max \{ \overline{\dim}_\text{B} F_\emptyset, \ \overline{\dim}_\text{B} C\} \ \leq \ \overline{\dim}_\text{B} F_C  \ \leq \ \max \{ s, \ \overline{\dim}_\text{B} C\},
\]
where $s$ is the \emph{similarity dimension} of $F_\emptyset$.  This shows that (\ref{inhomexpected}) is satisfied for upper box dimension provided the box dimension of the homogeneous attractor is given by the similarity dimension - which occurs if the IFS satisfies the open set condition (OSC), for example.  The second main result in \cite{me_inhom} was that (\ref{inhomexpected}) is in general \emph{not} satisfied for \emph{lower} box dimension.  It is evident in \cite{me_inhom} that proving upper bounds for the lower box dimension of an inhomogeneous attractor in general is a difficult problem.  In particular, if $N_\delta(C)$ oscillates wildly as $\delta \to 0$, then estimating $N_\delta( \mathcal{O})$ is tricky because it involves covering $C$ at many different scales. \emph{Covering regularity exponents} were introduced to try to tackle this problem, but yielded only upper and lower estimates.

\begin{figure}[H]
	\centering
	\includegraphics[width=135mm]{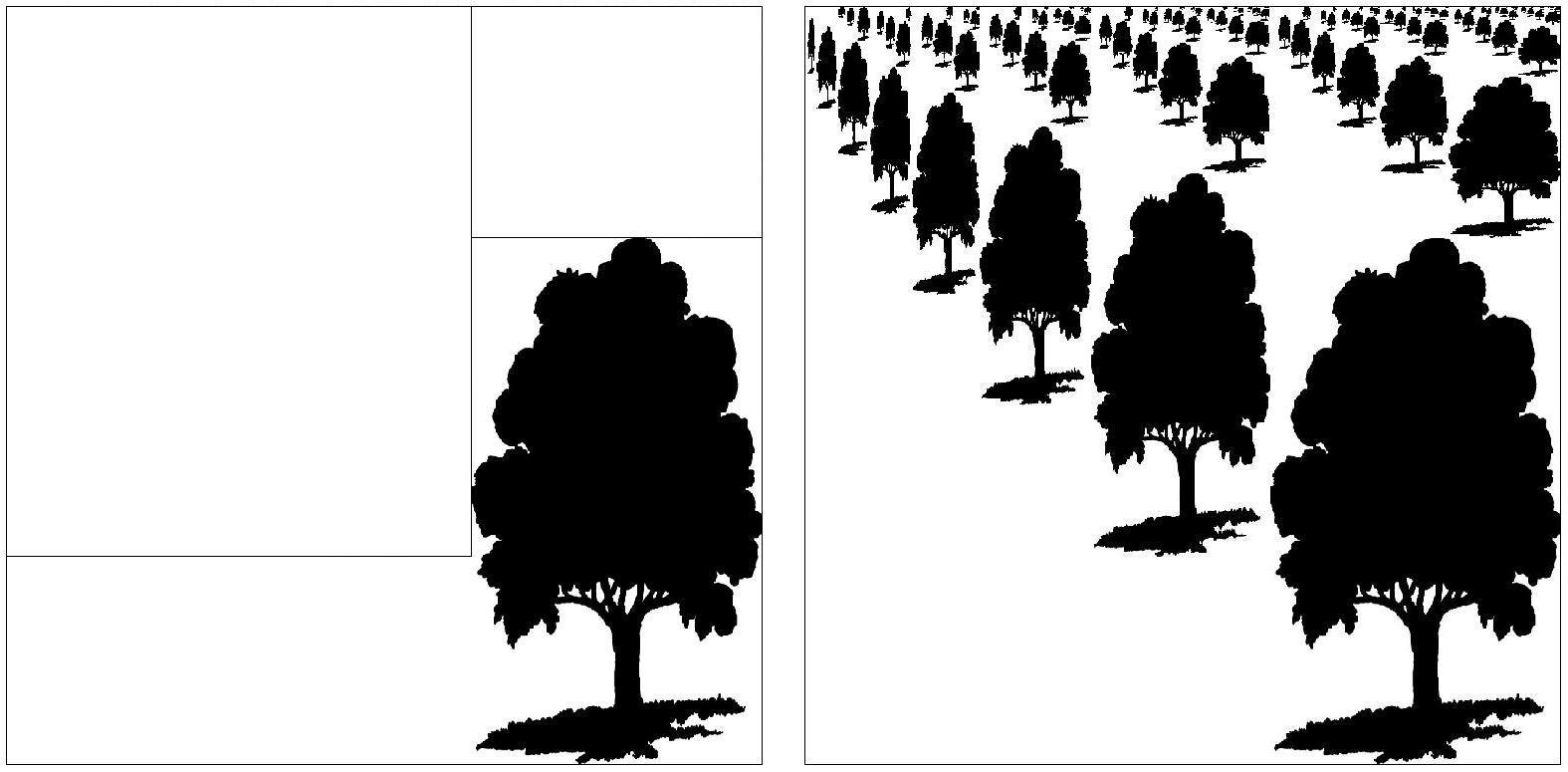}
	\caption{\emph{A fractal forest} (right).  The `forest' is depicted by an inhomogeneous attractor with the large tree at the bottom right being the condensation set.  The image on the left shows the two (affine) contractions used in the IFS, which, along with the condensation set, is the only data required to define the entire forest.  This is an example of \emph{image compression}.  The corresponding homogeneous attractor for this system is just a horizontal line at the top of the square.
}
\end{figure}

\subsection{Self-affine carpets} \label{affinecarpets}

Self-affine carpets are an important and well-studied class of fractal set.  This line of research began with the Bedford-McMullen carpets, introduced in the mid 80s \cite{bedford, mcmullen} and has blossomed into a vast and wide ranging field.  Particular attention has been paid to computing the dimensions of an array of different classes of self-affine carpets, see \cite{baranski, bedford, fengaffine, me_box, lalley-gatz, mackay, mcmullen}.  One reason that these classes are of interest is that they provide examples of self-affine sets with distinct Hausdorff and packing dimensions.  In this paper we will study inhomogeneous versions of the classes introduced by Lalley and Gatzouras in 1992 and Bara\'nski in 2007, and so we briefly recall these constructions and fix some notation.  Both classes consist of self-affine attractors of finite contractive iterated function systems acting on the unit square.
\\ \\
\textbf{Lalley-Gatzouras carpets:}
Take the unit square and divide it up into columns via a finite positive number of vertical lines.  Now divide each column up independently by slicing horizontally.  Finally select a subset of the small rectangles, with the only restriction being that the length of the base must be greater than or equal to the height, and for each chosen subrectangle include a map in the IFS which maps the unit square onto the rectangle via an orientation preserving linear contraction and a translation.  The Hausdorff and box dimensions of the attractors of such systems were computed in \cite{lalley-gatz}.  We note that in \cite{lalley-gatz} it was assumed that every map was strictly self-affine, i.e. that the length of the base of each rectangle must be strictly greater than the height.  We do not assume this here.

\begin{figure}[H]
	\centering
	\includegraphics[width=135mm]{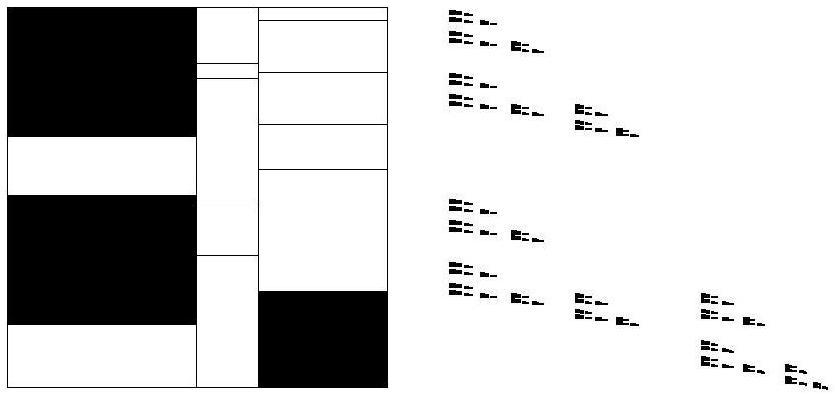}
\caption{The defining pattern for an IFS in the Lalley-Gatzouras class (left) and the corresponding attractor (right).}
\end{figure}

\textbf{Bara\'nski carpets:}
Again take the unit square, but this time divide it up into a collection of subrectangles by slicing horizontally and vertically a finite number of times (at least once in each direction).  Now take a subset of the subrectangles formed and form an IFS as above, choosing at least one rectangle with the horizontal side strictly longer than the vertical side and one with the vertical side strictly longer than the horizontal side.  The reason we assume this is so that the Baran\'nki class is disjoint from the Lalley-Gatzouras class.  Of course, if all the rectangles have longer vertical side, the IFS is equivalent to a Lalley-Gatzouras system via rotation by 90 degrees and so we omit discussion of this situation.  The key difference between the two classes is that in the Bara\'nski class one does not have that the strongest contraction is always in the vertical direction and this makes the Bara\'nski class significantly more difficult to deal with.  The Hausdorff and box dimensions of the attractors of such systems were computed in \cite{baranski}.  

\begin{figure}[H]
	\centering
	\includegraphics[width=135mm]{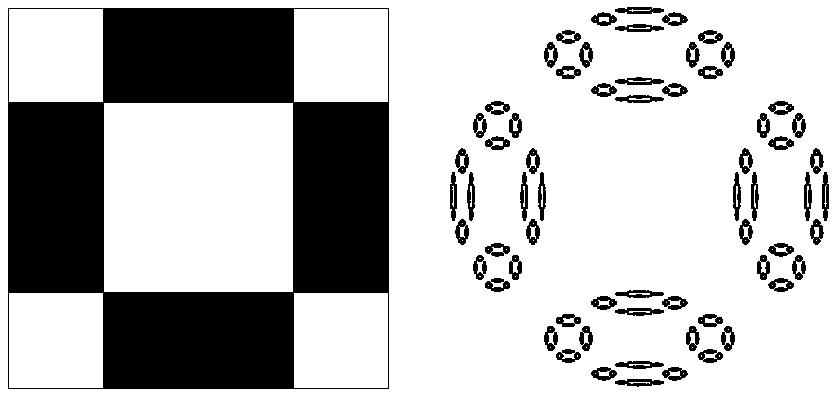}
\caption{The defining pattern for an IFS in the Bara\'nski class (left) and the corresponding attractor (right).}
\end{figure}

More general classes, containing both of the above, have been introduced and studied by Feng and Wang \cite{fengaffine} and Fraser \cite{me_box}.  See also the recent survey paper \cite{affinesurvey} which discusses the development of the study of self-affine carpets in the context of more general self-affine sets.
\\ \\
We will say that a set is an inhomogeneous self-affine carpet if $F_\emptyset$ is a self-affine carpet in the Lalley-Gatzouras or Bara\'nski class.   In order to state our results, we need to introduce some notation. Throughout this section $F_\emptyset$ will be a self-affine carpet which is the attractor of an IFS $\{S_i\}_{i \in \mathcal{I}}$ for some finite index set $\mathcal{I}$, with $\lvert \mathcal{I} \rvert \geq 2$ and, given a compact set $C\subseteq [0,1]^2$, $F_C$ will be the corresponding inhomogeneous self-affine carpet.  The maps $S_i$ in the IFS will be translate linear orientation preserving contractions on $[0,1]^2$ of the form
\[
S_i \big((x,y)\big) = (c_ix, d_iy)+\textbf{\emph{t}}_i
\]
for some contraction constants $c_i \in (0,1)$ in the horizontal direction and $d_i \in (0,1)$ in the vertical direction and a translation $\textbf{\emph{t}}_i \in \mathbb{R}^2$.

\section{Results}

In this section we state our results.  Let $\{S_i\}_{i \in \mathcal{I}}$ be an IFS described in Section \ref{affinecarpets} and fix a compact condensation set $C \subseteq [0,1]^2$.  As in the dimension theory of homogeneous self-affine carpets, the dimensions of orthogonal projections play an important role.  Let $\pi_1, \pi_2$ denote the orthogonal projections from the plane onto the first and second coordinates respectively and write
\[
s_1(F_\emptyset) = \dim_\text{B} \pi_1(F_\emptyset),
\]
\[
s_2(F_\emptyset) = \dim_\text{B} \pi_2(F_\emptyset),
\]
\[
\underline{s}_1(C) = \underline{\dim}_\text{B} \pi_1(C),
\]
\[
\overline{s}_1(C) = \overline{\dim}_\text{B} \pi_1(C),
\]
\[
\underline{s}_2(C) = \underline{\dim}_\text{B} \pi_2(C)
\]
and
\[
\overline{s}_2(C) = \overline{\dim}_\text{B} \pi_2(C).
\]
Note that $s_1(F_\emptyset)$ and $s_2(F_\emptyset)$ exist because they are the box dimensions of self-similar sets, whereas the equalities $\underline{s}_1(C) = \overline{s}_1(C)$ and $\underline{s}_2(C) = \overline{s}_2(C)$ may not hold, even if the box dimension of $C$ exists.  Finally, let $\underline{s}_A$,  $\overline{s}_A$, $\underline{s}_B$ and $\overline{s}_B$, be the unique solutions of
\[
\sum_{i \in \mathcal{I}} c_i^{\max\{s_1(F_\emptyset), \underline{s}_1(C)\} } d_i^{\underline{s}_A-\max\{s_1(F_\emptyset), \underline{s}_1(C)\} } = 1,
\]
\[
\sum_{i \in \mathcal{I}} c_i^{\max\{s_1(F_\emptyset), \overline{s}_1(C)\} } d_i^{\overline{s}_A-\max\{s_1(F_\emptyset), \overline{s}_1(C)\} } = 1,
\]
\[
\sum_{i \in \mathcal{I}} d_i^{\max\{s_2(F_\emptyset), \underline{s}_2(C)\} } c_i^{\underline{s}_B-\max\{s_2(F_\emptyset), \underline{s}_2(C)\} } = 1
\]
and
\[
\sum_{i \in \mathcal{I}} d_i^{\max\{s_2(F_\emptyset), \overline{s}_2(C)\} } c_i^{\overline{s}_B-\max\{s_2(F_\emptyset), \overline{s}_2(C)\} } = 1,
\]
respectively.  If $\underline{s}_A=\overline{s}_A$ or $\underline{s}_B=\overline{s}_B$, then write $s_A$ and $s_B$ respectively for the common values.  We need to make the following assumption to obtain a sharp formula for the upper box dimension of $F_C$.  Interestingly, this assumption concerns the Assouad dimension $\dim_\text{A}$ of projections of $C$.  This dimension is similar to upper box dimension, but is much more sensitive to local properties.  It is defined as
\newpage
\begin{eqnarray*}
\dim_\text{A} F \ = \  \inf \Bigg\{ &\alpha& : \text{     there exist constants $C, \, \rho>0$ such that,} \\
&\,& \text{ for all $0<r<R\leq \rho$, we have $\ \sup_{x \in F} \, N_r\big( B(x,R) \cap F \big) \ \leq \ C \bigg(\frac{R}{r}\bigg)^\alpha$ } \Bigg\}.
\end{eqnarray*}
See the papers \cite{me_assouad, luk} for more details.  The Assouad dimensions of homogeneous self-affine carpets were computed in \cite{me_assouad, mackay}.
\\ \\
\textbf{Assumption (A)}: $\dim_\text{A} \pi_i(C) \ \leq \ \max\{\overline{s}_i(C), s_i(F_\emptyset) \}$, for $i = 1,2$.
\\ \\
(A) is not a very strong assumption and is satisfied if, for example, $C$ is connected or the homogeneous IFS projects to an interval in the orthogonal directions.  We can now state our results.
\begin{thm} \label{main4}
Assume (A).  If $F_C$ is Lalley-Gatzouras, then
\[
\max\{ \underline{s}_A,  \underline{\dim}_\text{\emph{B}} C \} \ \leq \ \underline{\dim}_\text{\emph{B}} F_C \  \leq \ \overline{\dim}_\text{\emph{B}} F_C  \ =  \ \max\{ \overline{s}_A,  \overline{\dim}_\text{\emph{B}} C \}.
\]
If $F_C$ is Bara\'nski, then
\[
\max\{ \underline{s}_A, \underline{s}_B, \underline{\dim}_\text{\emph{B}} C \} \ \leq \ \underline{\dim}_\text{\emph{B}} F_C \  \leq \ \overline{\dim}_\text{\emph{B}} F_C  \ = \ \max\{ \overline{s}_A, \overline{s}_B,  \overline{\dim}_\text{\emph{B}} C \}.
\]
Furthermore, if we do not assume (A), then the same results are true but with the final equalities replaced by ``$\geq$'', i.e. we do not obtain an upper bound for $\overline{\dim}_\text{\emph{B}} F_C$.
\end{thm}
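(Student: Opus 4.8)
The plan is to reduce the problem to estimating $N_\delta(\mathcal{O})$, using the structural identity $F_C = \overline{\mathcal{O}} = F_\emptyset \cup \mathcal{O}$ from (\ref{structure}) together with the fact that box dimension is stable under closure and finite unions, so that $\overline{\dim}_{\mathrm B} F_C = \max\{\overline{\dim}_{\mathrm B} F_\emptyset, \overline{\dim}_{\mathrm B}\mathcal{O}\}$ and similarly for the lower box dimension. Since $\overline{\dim}_{\mathrm B} F_\emptyset \le \overline{s}_A$ (this is the Lalley--Gatzouras / Bara\'nski dimension formula, which is at most the root of the relevant pressure equation when we use $s_i(F_\emptyset)$ in the exponent), the real content is to show $\max\{\overline{s}_A, \overline{\dim}_{\mathrm B} C\} \le \overline{\dim}_{\mathrm B}\mathcal{O} \le \max\{\overline{s}_A, \overline{\dim}_{\mathrm B} C\}$ in the Lalley--Gatzouras case (and the analogous two-sided bound with $\overline{s}_B$ adjoined in the Bara\'nski case), and the corresponding lower bound (only) for $\underline{\dim}_{\mathrm B}$. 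The lower bounds are easy: $C \subseteq \mathcal{O} \subseteq F_C$ gives $\overline{\dim}_{\mathrm B} C \le \overline{\dim}_{\mathrm B} F_C$ and $\underline{\dim}_{\mathrm B} C \le \underline{\dim}_{\mathrm B} F_C$, while $F_\emptyset \subseteq F_C$ gives the $\underline{s}_A$ (resp.\ $\overline{s}_A$) bounds once we know the homogeneous box dimension equals the pressure root — and in fact we get the sharper statement with $\underline{s}_A$ by observing that $\mathcal{O}$ contains, for each $\mathbf{i} \in \mathcal{I}^*$, an affine copy $S_{\mathbf{i}}(C)$ of $C$, and a Moran-type counting over a carefully chosen antichain of cylinders of comparable "shape" at scale $\delta$ forces $N_\delta(\mathcal{O}) \gtrsim \delta^{-\underline{s}_A + o(1)}$; this is where the definition of $\underline{s}_A$ via $\underline{s}_1(C)$ enters.

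For the upper bound on $\overline{\dim}_{\mathrm B}\mathcal{O}$ — the heart of the theorem — I would fix a small scale $\delta$ and cover $\mathcal{O} = C \cup \bigcup_{\mathbf{i}} S_{\mathbf{i}}(C)$ as follows. Group the finite words $\mathbf{i} = (i_1,\dots,i_k)$ into a stopping set: because in the Lalley--Gatzouras class the horizontal contraction always dominates ($c_i \ge d_i$), the image $S_{\mathbf{i}}([0,1]^2)$ is a rectangle of width $c_{\mathbf{i}} := \prod c_{i_j}$ and height $d_{\mathbf{i}} := \prod d_{i_j}$ with $c_{\mathbf{i}} \ge d_{\mathbf{i}}$; I would stop $\mathbf{i}$ when $c_{\mathbf{i}}$ first drops below $\delta$ (an "approximate square" argument in the horizontal direction), yielding a family of rectangles of width $\approx \delta$ and height $\le \delta$. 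Covering $S_{\mathbf{i}}(C)$ inside such a rectangle at scale $\delta$ costs $\lesssim N_{\delta/c_{\mathbf{i}}}(\pi_1(C)) \cdot (\text{number of }\delta\text{-bands in the vertical direction the image occupies})$; the vertical extent is governed by $d_{\mathbf{i}}$ and by how $\pi_2(C)$ sits in that fiber, and here we must further sub-cover the thin direction using $N_r(\pi_2(C))$ at the relevant (possibly tiny) scale $r = \delta/d_{\mathbf{i}}$ — this is precisely the "covering $C$ at many different scales" difficulty flagged in the introduction. Assumption (A), bounding $\dim_{\mathrm A}\pi_2(C)$, is exactly what lets us control these auxiliary fiber-covers uniformly: $N_r(B(x,R)\cap \pi_2(C)) \lesssim (R/r)^{\dim_{\mathrm A}\pi_2(C)} \le (R/r)^{\max\{\overline{s}_2(C), s_2(F_\emptyset)\}}$, converting a hard-to-control $\limsup$ into a clean exponent. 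Summing the per-word costs over the stopping family and invoking the subadditive pressure identity defining $\overline{s}_A$ (with the extra $C$-contribution absorbed into the $\max$ with $\overline{\dim}_{\mathrm B} C$ coming from the un-iterated copy $C$ itself and from words where the tail is short) yields $N_\delta(\mathcal{O}) \lesssim \delta^{-\max\{\overline{s}_A, \overline{\dim}_{\mathrm B} C\} - \varepsilon}$ for every $\varepsilon > 0$.

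The Bara\'nski case requires running the same argument twice — once stopping on the horizontal contractions (giving the $\overline{s}_A$ term) and once on the vertical contractions (giving $\overline{s}_B$) — because the dominant direction of a cylinder is no longer fixed; one partitions the stopping family according to whether $c_{\mathbf{i}} \le d_{\mathbf{i}}$ or not and applies the appropriate "approximate square" slicing, so the final bound picks up the $\max$ with both $\overline{s}_A$ and $\overline{s}_B$, and Assumption (A) is needed for $\pi_1(C)$ and $\pi_2(C)$ both. Finally, the "furthermore" clause — dropping (A) and retaining only the lower bounds — is immediate, since the lower-bound half of the argument never used (A): we simply omit the upper estimate. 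I expect the main obstacle to be the bookkeeping in the two-scale cover of each $S_{\mathbf{i}}(C)$: making the Assouad-dimension estimate interact cleanly with the pressure sum, in particular checking that the exponent one extracts is genuinely $\max\{\overline{s}_1(C), s_1(F_\emptyset)\}$ in the width direction and $\max\{\overline{s}_2(C), s_2(F_\emptyset)\}$ in the height direction (so that the resulting root of the summed pressure equation is exactly $\overline{s}_A$, not something larger), and handling the boundary words where the stopping happens very early, which is where the bare $\overline{\dim}_{\mathrm B} C$ term in the maximum comes from.
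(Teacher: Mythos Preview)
Your outline has a genuine gap in the lower bound $\overline{s}_A \le \overline{\dim}_{\mathrm B} F_C$. You claim this follows from $F_\emptyset \subseteq F_C$ ``once we know the homogeneous box dimension equals the pressure root'', but $\dim_{\mathrm B} F_\emptyset$ is the root of the pressure equation with exponent $s_1(F_\emptyset)$, whereas $\overline{s}_A$ uses $\max\{s_1(F_\emptyset), \overline{s}_1(C)\}$; when $\overline{s}_1(C) > s_1(F_\emptyset)$ (precisely the interesting case where (\ref{inhomexpected}) can fail) one has $\dim_{\mathrm B} F_\emptyset < \overline{s}_A$ strictly, so monotonicity is useless. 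Your Moran-type counting over a $\delta$-stopping yields only $N_\delta(\mathcal{O}) \gtrsim \delta^{-\underline{s}_A + o(1)}$ for \emph{every} $\delta$ (hence the $\underline{\dim}_{\mathrm B}$ bound), because the per-cylinder lower estimate on $N_{\delta/\alpha_1(\mathbf{i})}(\pi_1(\mathcal{O}))$ can only use $\underline{s}_1(C)$; to extract $\overline{s}_1(C)$ you would need the ratios $\delta/\alpha_1(\mathbf{i})$ to be simultaneously good scales for $\pi_1(C)$ across \emph{all} $\mathbf{i}$ in the stopping, and these ratios are in general wildly different. The paper resolves this by first treating the special case where every map has the same linear part $(c,d)$ (so $\delta/\alpha_1(\mathbf{i})$ is constant on $\mathcal{I}^k$ and one can pick $\delta$ along the good sequence for $\pi_1(\mathcal{O})$), and then approximating an arbitrary system from within by such uniform subsystems via a Stirling/large-deviations combinatorial count in the spirit of Ferguson--Jordan--Shmerkin. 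This step is not a technicality; without it you only get $\overline{\dim}_{\mathrm B} F_C \ge \underline{s}_A$.

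Your upper-bound sketch also has the geometry reversed. In the Lalley--Gatzouras class one must stop on the \emph{shorter} side $\alpha_2(\mathbf{i}) = d_{\mathbf{i}} < \delta$, not on $c_{\mathbf{i}}$: stopping on $c_{\mathbf{i}}$ gives rectangles that already fit in a single $\delta$-square, so the covering information is lost and your subsequent claim that $r = \delta/d_{\mathbf{i}}$ is ``possibly tiny'' is inconsistent (it is $\ge 1$). With the correct stopping the rectangles have height $\approx \delta$ and width $\alpha_1(\mathbf{i}) \ge \delta$, and one covers the full image $S_{\mathbf{i}}(\mathcal{O})$ (not merely $S_{\mathbf{i}}(C)$) by $\approx N_{\delta/\alpha_1(\mathbf{i})}(\pi_{\mathbf{i}}(\mathcal{O}))$ squares, using that $\pi_{\mathbf{i}}(\mathcal{O})$ is itself an inhomogeneous self-similar set with known upper box dimension $\max\{s_{\mathbf{i}}(F_\emptyset),\overline{s}_{\mathbf{i}}(C)\}$. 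Assumption~(A) enters only for the \emph{pre-stopping} words ($\alpha_2(\mathbf{i}) > \delta$): there one first covers $C$ at scale $\delta/\alpha_2(\mathbf{i})$, obtaining ellipses with minor axis $\delta$, and then refines each ellipse along its \emph{major} axis --- so it is $\dim_{\mathrm A}\pi_1(C)$ (the long-direction projection), not $\dim_{\mathrm A}\pi_2(C)$, that must be controlled in the Lalley--Gatzouras case.
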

%IF NEEDED INCLUDE DESCRIPTION OF ASSOUAD DIMENSION
We will prove Theorem \ref{main4} in Section \ref{mainproof4}.  It is regrettable that we need to assume (A) to obtain a precise result but in a certain sense it is not important, because the highlight of this paper is the demonstration that the expected relationship (\ref{inhomexpected}) can fail for inhomogeneous carpets and, since we only need assumption (A) for the \emph{upper bound}, this does not change the situations where we can demonstrate this failure.  Also, we note that without assumption (A) our methods would yield an upper bound for $\overline{\dim}_\text{B} F_C$ where we use the Assouad dimensions of projections instead of upper box dimensions in the definition of $\overline{s}_A$ and $\overline{s}_B$, but we omit further discussion of this.  Indeed, it was our initial belief that (A) was not required at all and in fact Theorem \ref{main4} remains true without it, but surprisingly this is not the case.  In Section \ref{example2} we prove this by example.
\\ \\
Notice that we obtain a precise formula for the upper box dimension, but only lower bounds for the lower box dimension.  As discussed above, computing upper bounds for lower box dimension is difficult, even in the simpler setting of self-similar sets.  To obtain better estimates here, one could analyse the behaviour of the oscillations of the function $ \delta \mapsto N_\delta(C)$ using covering regularity exponents which were introduced in \cite{me_inhom} for example, but we do not pursue this and instead focus more on the upper box dimension and, in particular, the fact that the relationship (\ref{inhomexpected}) can fail. The following corollaries are immediate and include some simple sufficient conditions for the relationship (\ref{inhomexpected}) to hold, or not hold.
\begin{cor}
Suppose the box dimensions of $C$ and the orthogonal projections of $C$ exist and assume (A).  If $F_C$ is Lalley-Gatzouras, then
\[
\dim_\text{\emph{B}} F_C \  =  \  \max\{ s_A,  \dim_\text{\emph{B}} C \}.
\]
If $F_C$ is Bara\'nski, then
\[
\dim_\text{\emph{B}} F_C \  =  \  \max\{ s_A, s_B,  \dim_\text{\emph{B}} C \}.
\]
\end{cor}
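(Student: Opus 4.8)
The plan is to obtain the corollary as an immediate consequence of Theorem \ref{main4}, the only work being to observe that the additional hypotheses force the lower and upper quantities appearing there to coincide.

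First I would record what the hypotheses give us. Since the box dimension of $C$ exists, $\underline{\dim}_\text{B} C = \overline{\dim}_\text{B} C = \dim_\text{B} C$; and since the box dimensions of the projections $\pi_1(C)$ and $\pi_2(C)$ exist, $\underline{s}_i(C) = \overline{s}_i(C)$ for $i=1,2$, and I write $s_i(C)$ for the common value. Next I would check that $\underline{s}_A = \overline{s}_A$, and, in the Bara\'nski case, $\underline{s}_B = \overline{s}_B$. This is because $\underline{s}_A$ is defined as the solution of the equation $\sum_{i \in \mathcal{I}} c_i^{\,a}\, d_i^{\,\underline{s}_A - a} = 1$ with $a = \max\{s_1(F_\emptyset), \underline{s}_1(C)\}$, while $\overline{s}_A$ solves the same equation with $a = \max\{s_1(F_\emptyset), \overline{s}_1(C)\}$; as these two values of $a$ are equal, the two equations are literally the same, and since (for fixed $a$) the left-hand side is a continuous, strictly decreasing function of the exponent tending to $0$ at infinity, the solution is unique. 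Hence $\underline{s}_A = \overline{s}_A =: s_A$, and likewise $\underline{s}_B = \overline{s}_B =: s_B$.

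Finally I would substitute these equalities into the conclusion of Theorem \ref{main4}, which is applicable because (A) is assumed. In the Lalley-Gatzouras case the theorem yields
\[
\max\{s_A,\ \dim_\text{B} C\} \ \leq\ \underline{\dim}_\text{B} F_C \ \leq\ \overline{\dim}_\text{B} F_C \ =\ \max\{s_A,\ \dim_\text{B} C\},
\]
so all three quantities are equal, $\dim_\text{B} F_C$ exists, and it equals $\max\{s_A,\ \dim_\text{B} C\}$. The Bara\'nski case is identical, carrying the extra term $s_B$ through the maxima.

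There is no real obstacle here: every step is a direct substitution, and the only point that needs a word of justification is the well-definedness of $s_A$ and $s_B$ as unique solutions of their defining pressure-type equations, which follows from strict monotonicity of the left-hand side in the unknown exponent.
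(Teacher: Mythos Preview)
Your proposal is correct and matches the paper's approach: the paper simply states that this corollary is ``immediate'' from Theorem \ref{main4}, and what you have written is precisely the routine verification that the hypotheses collapse the upper and lower quantities so that the sandwich in Theorem \ref{main4} becomes an equality.
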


\vspace{3mm}

\begin{cor}
Assuming (A), the relationship (\ref{inhomexpected}) holds for upper box dimension, i.e. 
\[
\overline{\dim}_\text{\emph{B}} F_C \  =  \  \max\{ \overline{\dim}_\text{\emph{B}} F_\emptyset,  \overline{\dim}_\text{\emph{B}} C \},
\]
in each of the following cases:
\begin{itemize}
\item[(1)] If $F_C$ is Lalley-Gatzouras and $\overline{s}_1(C) \leq s_1(F_\emptyset)$
\item[(2)] If $F_C$ is Bara\'nski, $\overline{s}_1(C) \leq s_1(F_\emptyset)$ and $\overline{s}_2(C) \leq s_2(F_\emptyset)$
\end{itemize}
\end{cor}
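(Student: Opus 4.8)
The plan is to read both statements off directly from Theorem~\ref{main4}: each hypothesis collapses a maximum in the definition of $\overline{s}_A$ (and, in the Bara\'nski case, of $\overline{s}_B$) and thereby identifies these exponents with $\overline{\dim}_\text{B} F_\emptyset$. Recall first that the box dimension of a homogeneous Lalley-Gatzouras or Bara\'nski carpet exists, so $\overline{\dim}_\text{B} F_\emptyset = \dim_\text{B} F_\emptyset$ in both cases.

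For case~(1): since $\overline{s}_1(C) \leq s_1(F_\emptyset)$ we have $\max\{s_1(F_\emptyset), \overline{s}_1(C)\} = s_1(F_\emptyset)$, so the equation defining $\overline{s}_A$ becomes
\[
\sum_{i \in \mathcal{I}} c_i^{\,s_1(F_\emptyset)}\, d_i^{\,\overline{s}_A - s_1(F_\emptyset)} = 1.
\]
Because $s_1(F_\emptyset) = \dim_\text{B}\pi_1(F_\emptyset)$, this is precisely the equation whose unique root is the box dimension of the homogeneous Lalley-Gatzouras carpet $F_\emptyset$ (see \cite{lalley-gatz}). Hence $\overline{s}_A = \overline{\dim}_\text{B} F_\emptyset$, and substituting into the conclusion of Theorem~\ref{main4} gives $\overline{\dim}_\text{B} F_C = \max\{\overline{s}_A, \overline{\dim}_\text{B} C\} = \max\{\overline{\dim}_\text{B} F_\emptyset, \overline{\dim}_\text{B} C\}$, i.e. (\ref{inhomexpected}).

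For case~(2) I would run the same argument for both exponents. The two hypotheses give $\max\{s_1(F_\emptyset), \overline{s}_1(C)\} = s_1(F_\emptyset)$ and $\max\{s_2(F_\emptyset), \overline{s}_2(C)\} = s_2(F_\emptyset)$, so $\overline{s}_A$ and $\overline{s}_B$ are the unique roots of $\sum_{i} c_i^{\,s_1(F_\emptyset)} d_i^{\,\overline{s}_A - s_1(F_\emptyset)} = 1$ and $\sum_{i} d_i^{\,s_2(F_\emptyset)} c_i^{\,\overline{s}_B - s_2(F_\emptyset)} = 1$ respectively; by Bara\'nski's box dimension formula \cite{baranski} the larger of these two roots equals $\dim_\text{B} F_\emptyset$, so $\max\{\overline{s}_A, \overline{s}_B\} = \overline{\dim}_\text{B} F_\emptyset$ and Theorem~\ref{main4} again yields (\ref{inhomexpected}). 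The only step needing any care is checking that the reduced equations above coincide with the box-dimension equations in \cite{lalley-gatz} and \cite{baranski}; since $s_1(F_\emptyset)$ and $s_2(F_\emptyset)$ were defined to be exactly the box dimensions of the relevant projections, this is bookkeeping rather than a genuine obstacle, and nothing beyond Theorem~\ref{main4} is required.
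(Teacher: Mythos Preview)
Your argument is correct and is precisely the reasoning the paper has in mind: the corollary is stated as ``immediate'' from Theorem~\ref{main4} without a written proof, and your derivation---collapsing the maxima in the defining equations for $\overline{s}_A$ and $\overline{s}_B$ under the stated hypotheses and then recognising the resulting equations as the Lalley--Gatzouras and Bara\'nski box-dimension formulae---is exactly how one makes this immediate. Nothing further is needed.
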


\vspace{3mm}

\begin{cor} \label{inhomfailscor}
The relationship (\ref{inhomexpected}) fails for lower box dimension, i.e. 
\[
\underline{\dim}_\text{\emph{B}} F_C \  >  \  \max\{ \underline{\dim}_\text{\emph{B}} F_\emptyset,  \underline{\dim}_\text{\emph{B}} C \},
\]
in each of the following cases:
\begin{itemize}
\item[(1)] If $F_C$ is Lalley-Gatzouras, $\underline{s}_A \geq \underline{\dim}_\text{\emph{B}} C$ and $\underline{s}_1(C) > s_1(F_\emptyset)$
\item[(2)] If $F_C$ is Bara\'nski, $\max\{\underline{s}_A, \underline{s}_A\} \geq \underline{\dim}_\text{\emph{B}} C$, $\underline{s}_1(C) > s_1(F_\emptyset)$ and $\underline{s}_2(C) > s_2(F_\emptyset)$.
\end{itemize}
Similarly, the relationship (\ref{inhomexpected}) fails for upper box dimension, i.e. 
\[
\overline{\dim}_\text{\emph{B}} F_C \  >  \  \max\{ \overline{\dim}_\text{\emph{B}} F_\emptyset,  \overline{\dim}_\text{\emph{B}} C \},
\]
in each of the following cases:
\begin{itemize}
\item[(1)] If $F_C$ is Lalley-Gatzouras, $\overline{s}_A \geq \overline{\dim}_\text{\emph{B}} C$ and $\overline{s}_1(C) > s_1(F_\emptyset)$
\item[(2)] If $F_C$ is Bara\'nski, $\max\{\overline{s}_A, \overline{s}_A\} \geq \overline{\dim}_\text{\emph{B}} C$, $\overline{s}_1(C) > s_1(F_\emptyset)$ and $\overline{s}_2(C) > s_2(F_\emptyset)$.
\end{itemize}
\end{cor}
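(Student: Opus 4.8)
The plan is to read Corollary \ref{inhomfailscor} off Theorem \ref{main4}; the only substantive ingredient is a monotonicity property of the root of the defining pressure-type equation. I will spell out the upper box dimension, Lalley--Gatzouras case, the others being identical in structure. By Theorem \ref{main4}, $\overline{\dim}_\text{B} F_C = \max\{\overline{s}_A,\ \overline{\dim}_\text{B} C\}$, and since a standing hypothesis is $\overline{s}_A \geq \overline{\dim}_\text{B} C$, this collapses to $\overline{\dim}_\text{B} F_C = \overline{s}_A$. Thus it suffices to prove the single inequality $\overline{s}_A > \overline{\dim}_\text{B} F_\emptyset$: combined with $\overline{s}_A \geq \overline{\dim}_\text{B} C$ it gives $\overline{\dim}_\text{B} F_C = \overline{s}_A > \max\{\overline{\dim}_\text{B} F_\emptyset,\ \overline{\dim}_\text{B} C\}$, so (\ref{inhomexpected}) fails. (The only thing to double check here is the non-generic coincidence $\overline{s}_A = \overline{\dim}_\text{B} C > \overline{\dim}_\text{B} F_\emptyset$, in which case the inequality is an equality; away from it the conclusion is immediate, and in it one would replace ``$\geq$'' by ``$>$'' in the hypothesis.)

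Everything therefore rests on the following fact. For $\sigma \geq 0$ let $\tau(\sigma)$ be the unique real number with $\sum_{i \in \mathcal{I}} c_i^{\sigma} d_i^{\tau(\sigma) - \sigma} = 1$; this is well defined because the left-hand side is continuous in the exponent, strictly decreasing (as $0 < d_i < 1$) and sweeps out $(0,\infty)$. I claim $\sigma \mapsto \tau(\sigma)$ is non-decreasing, and strictly increasing as soon as $c_i > d_i$ for some $i$. Indeed, for $\sigma_2 > \sigma_1$, using the Lalley--Gatzouras hypothesis $c_i \geq d_i$,
\[
\sum_{i \in \mathcal{I}} c_i^{\sigma_2} d_i^{\tau(\sigma_1) - \sigma_2} \ = \ \sum_{i \in \mathcal{I}} \Big(\tfrac{c_i}{d_i}\Big)^{\sigma_2 - \sigma_1} c_i^{\sigma_1} d_i^{\tau(\sigma_1) - \sigma_1} \ \geq \ \sum_{i \in \mathcal{I}} c_i^{\sigma_1} d_i^{\tau(\sigma_1) - \sigma_1} \ = \ 1,
\]
strictly if some $c_i > d_i$; since $t \mapsto \sum_{i} c_i^{\sigma_2} d_i^{t - \sigma_2}$ is strictly decreasing and equals $1$ at $t = \tau(\sigma_2)$, this forces $\tau(\sigma_2) \geq \tau(\sigma_1)$, strictly under the self-affinity proviso. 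Now $\overline{\dim}_\text{B} F_\emptyset = \tau(s_1(F_\emptyset))$ by \cite{lalley-gatz}, whereas $\overline{s}_1(C) > s_1(F_\emptyset)$ forces $\max\{s_1(F_\emptyset),\overline{s}_1(C)\} = \overline{s}_1(C)$, so from its defining equation $\overline{s}_A = \tau(\overline{s}_1(C))$. Strict monotonicity together with $\overline{s}_1(C) > s_1(F_\emptyset)$ then yields $\overline{s}_A > \overline{\dim}_\text{B} F_\emptyset$, as required.

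The remaining three cases use the same device. For the lower box dimension in the Lalley--Gatzouras case one uses the bound $\underline{\dim}_\text{B} F_C \geq \max\{\underline{s}_A,\ \underline{\dim}_\text{B} C\}$ of Theorem \ref{main4}, the identity $\underline{\dim}_\text{B} F_\emptyset = \overline{\dim}_\text{B} F_\emptyset = \tau(s_1(F_\emptyset))$ (homogeneous carpets have a box dimension), and $\underline{s}_1(C) > s_1(F_\emptyset)$ to get $\underline{s}_A = \tau(\underline{s}_1(C)) > \underline{\dim}_\text{B} F_\emptyset$. In the Bara\'nski cases (where the ``$\max\{\underline{s}_A,\underline{s}_A\}$'' and ``$\max\{\overline{s}_A,\overline{s}_A\}$'' in the statement should of course read $\max\{\underline{s}_A,\underline{s}_B\}$ and $\max\{\overline{s}_A,\overline{s}_B\}$) one runs the argument in both directions: writing $\tilde{\tau}(\sigma)$ for the root of $\sum_i d_i^{\sigma} c_i^{\tilde{\tau}(\sigma) - \sigma} = 1$, the box dimension $\overline{\dim}_\text{B} F_\emptyset$ is by \cite{baranski} the larger of $\tau(s_1(F_\emptyset))$ and $\tilde{\tau}(s_2(F_\emptyset))$, and the two hypotheses $\overline{s}_1(C) > s_1(F_\emptyset)$, $\overline{s}_2(C) > s_2(F_\emptyset)$ give $\overline{s}_A > \tau(s_1(F_\emptyset))$ and $\overline{s}_B > \tilde{\tau}(s_2(F_\emptyset))$, hence $\max\{\overline{s}_A,\overline{s}_B\} > \overline{\dim}_\text{B} F_\emptyset$; combined with $\max\{\overline{s}_A,\overline{s}_B\} \geq \overline{\dim}_\text{B} C$ and $\overline{\dim}_\text{B} F_C = \max\{\overline{s}_A,\overline{s}_B,\overline{\dim}_\text{B} C\}$ from Theorem \ref{main4}, the failure of (\ref{inhomexpected}) follows; the lower Bara\'nski case is identical with underlines throughout.

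The one genuinely delicate point — and the place where the strictness of the conclusion actually lives — is the strict monotonicity of $\tau$, which needs $c_i > d_i$ for some $i$, i.e. that the carpet is truly self-affine and not accidentally self-similar. For the Bara\'nski class this is automatic, since that class by definition contains a rectangle strictly wider than tall (and one strictly taller than wide, which handles $\tilde{\tau}$). For the Lalley--Gatzouras class, where we only imposed $c_i \geq d_i$, one must add the harmless proviso that the system is not self-similar; if it is, $\tau$ is constant, $\overline{s}_A = \overline{\dim}_\text{B} F_\emptyset$, and one recovers only the non-strict statement already implicit in the self-similar theory of \cite{me_inhom}. Beyond this, the proof is pure bookkeeping of which term realises each maximum, the only subtle instance being the coincidence $\overline{s}_A = \overline{\dim}_\text{B} C$ flagged above.
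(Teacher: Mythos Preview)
The paper offers no proof of this corollary beyond the word ``immediate'', so your approach --- reading the result off Theorem~\ref{main4} via monotonicity of the pressure root $\sigma \mapsto \tau(\sigma)$ --- is exactly what is intended, and for the Lalley--Gatzouras case it is complete and correct. Two small remarks there: you invoke the \emph{equality} $\overline{\dim}_\text{B} F_C = \max\{\overline{s}_A, \overline{\dim}_\text{B} C\}$, which needs assumption~(A), but in fact you only use the inequality $\geq$, which holds unconditionally; and the edge cases you flag (the coincidence $\overline{s}_A = \overline{\dim}_\text{B} C$, and the self-similar degeneration $c_i = d_i$ for all $i$) are genuine caveats on the corollary as stated.

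There is, however, a real gap in your Bara\'nski argument. Your monotonicity proof for $\tau$ uses $(c_i/d_i)^{\sigma_2 - \sigma_1} \geq 1$ \emph{termwise}, which requires $c_i \geq d_i$ for every $i$ --- and the Bara\'nski class, by definition, violates this. The existence of one map with $c_i > d_i$ buys strictness \emph{once monotonicity is already known}, but does not by itself give monotonicity: when some $c_j < d_j$ the corresponding term in your displayed sum decreases, and the total inequality can fail. Concretely, one can build Bara\'nski systems in which $\tau$ is decreasing at $s_1(F_\emptyset)$ --- for instance two maps with $(c_1,d_1)=(0.4,0.39)$ and $(c_2,d_2)=(0.01,0.5)$, each in its own column and row --- so that $\overline{s}_A < d_A$ for $\overline{s}_1(C)$ slightly above $s_1(F_\emptyset)$. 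In that particular example the corollary nonetheless survives, because $d_B > d_A$ and $\tilde\tau$ \emph{is} increasing at $s_2(F_\emptyset)$, giving $\overline{s}_B > d_B = \dim_\text{B} F_\emptyset$. This suggests the correct route is not to prove $\overline{s}_A > d_A$ and $\overline{s}_B > d_B$ separately, but rather to show that whichever of $\tau, \tilde\tau$ realises $\max\{d_A, d_B\}$ is increasing at its base point. That claim needs an argument, and yours does not supply one.
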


In a certain sense, Corollary \ref{inhomfailscor} is the most interesting as it gives simple, and easily constructible, conditions for the relationship (\ref{inhomexpected}) to fail.  We will construct such an example in Section \ref{combs}.
\\ \\
Although the underlying homogeneous IFSs automatically satisfy the OSC, it is worth remarking that our results impose no further separation conditions concerning the condensation set $C$.  In particular, $C$ may have arbitrary overlaps with $F_\emptyset$.
\\ \\
It would be interesting to extend the results in this case to the more general carpets introduced by Feng and Wang \cite{fengaffine} or Fraser \cite{me_box}.  However, there are some additional difficulties in these cases.  Indeed, the Feng-Wang case is intimately related to the question of whether the relationship (\ref{inhomexpected}) holds for self-similar sets not satisfying the OSC, see \cite[Question 2.4]{me_inhom}.  In particular, the sets $\pi_1(F_C)$ and $\pi_2(F_C)$ are inhomogeneous self-similar sets and knowledge of their dimension is crucial in the subsequent proofs.  Furthermore, in the Fraser case, one would need to extend the results on inhomogeneous self-similar sets to the graph-directed case.  There is certainly scope for future research here, and it is easily seen that our methods give solutions to the more general problem in certain cases and can always provide non-trivial estimates; however, we omit further discussion.

\section{Examples}

\subsection{Inhomogeneous fractal combs} \label{combs}

We give a construction of an inhomogeneous Bedford-McMullen carpet, which we refer to as an \emph{inhomogeneous fractal comb} which exhibits some interesting properties.  The underlying homogeneous IFS will be a Bedford-McMullen construction where the unit square has been divided into 2 columns of width $1/2$, and $n > 2$ rows of height $1/n$.  The IFS is then made up of all the maps which correspond to the left hand column.  The condensation set for this construction is taken as $C = [0,1] \times \{0\}$, i.e. the base of the unit square.  The inhomogeneous attractor is termed the inhomogeneous fractal comb and is denoted by $F_C^n$.
\\ \\
It follows from Theorem \ref{main4} that $\underline{\dim}_\text{B} F_C^n = \overline{\dim}_\text{B} F_C^n$ is the unique solution of
\[
n \, 2^{-1} \, n^{1-s} = 1
\]
which gives
\[
\underline{\dim}_\text{B} F_C^n \ = \ \overline{\dim}_\text{B} F_C^n  \ = \ 2-\log 2/ \log n \ > \ 1.
\]
However,
\[
\max \{ \overline{\dim}_\text{B} F_\emptyset, \ \overline{\dim}_\text{B} C \} \  = \ 1
\]
and thus our fractal comb provides a simple example showing that the `expected relationship' for upper box dimension (\ref{inhomexpected}) can fail for self-affine sets, even if the homogeneous IFS satisfies the OSC.  This is in stark contrast to the self-similar setting, see \cite[Corollary 2.2]{me_inhom}.
\\ \\
This example has another interesting property: it shows that $\overline{\dim}_\text{B} F_C^n$ does not just depend on the sets $F_\emptyset$ and $C$, but also depends on the IFS itself.  To see this observe that $F_\emptyset$ and $C$ do not depend on $n$, but $\overline{\dim}_\text{B} F_C^n$ does.  In fact $F_\emptyset = \{0 \} \times [0,1]$, i.e. the left hand side of the unit square, for any $n$.  Again, this behaviour is not observed in the self-similar setting.
\\ \\
Finally, observe that, although the inhomogeneous fractal combs are subsets of $\mathbb{R}^2$ and the expected box dimension is 1, we can find examples where the achieved box dimension is arbitrarily close to 2, demonstrating that, in this case, there is no limit to how `badly' the relationship (\ref{inhomexpected}) can fail.
\begin{figure}[H]
	\centering
	\includegraphics[width=140mm]{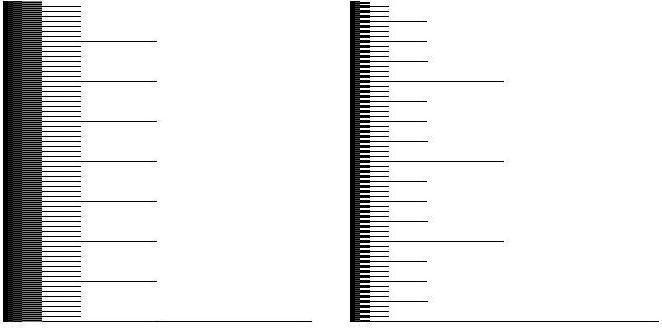}
	\caption{Two fractal combs: the inhomogeneous fractal combs $F_C^8$, with box dimension $5/3$ (left); and $F_C^4$, with box dimension $3/2$ (right).}
\end{figure}

\subsection{A counterexample} \label{example2}

In this section we provide an example showing that the lower bound given in Theorem \ref{main4} for the upper box dimension is not sharp in general.  This is somewhat surprising and relies on the condensation set $C$ having strange scaling properties.  The specific construction for $C$ used here was suggested to us by Tuomas Orponen, for which we are grateful.
\\ \\
Take the unit square and fix $\eta \in (0,1]$ which is the reciprocal of a power of 4. The reason 4 is chosen is to make the subsequent construction fit together with the dyadic grid. Down the right hand side of the square place $\eta^{-1/2}$ smaller squares of side length $\eta^{1/2}$ on top of each other forming a single column.  Inside each of these smaller squares place $\eta^{-1/2}$ even smaller squares of side length $\eta$ along the base.  This is the basic defining pattern.  Now to construct $C$, iterate this construction, but at each stage choose $\eta$ drastically smaller than the $\eta$ at the previous stage.  More precisely, choose $\eta_1 = 1/4$ and then define $\eta_i$ recursively by $\eta_i = (\eta_1 \cdots \eta_{i-1})^i$.  The limit set is a compact set $C \subset [0,1]^2$ and elementary calculations yield that
\[
\overline{\dim}_\text{B} C = 1 \qquad \text{and} \qquad \overline{\dim}_\text{B} \pi_1(C) = 1/2.
\]

\begin{figure}[H]
	\centering
	\includegraphics[width=148mm]{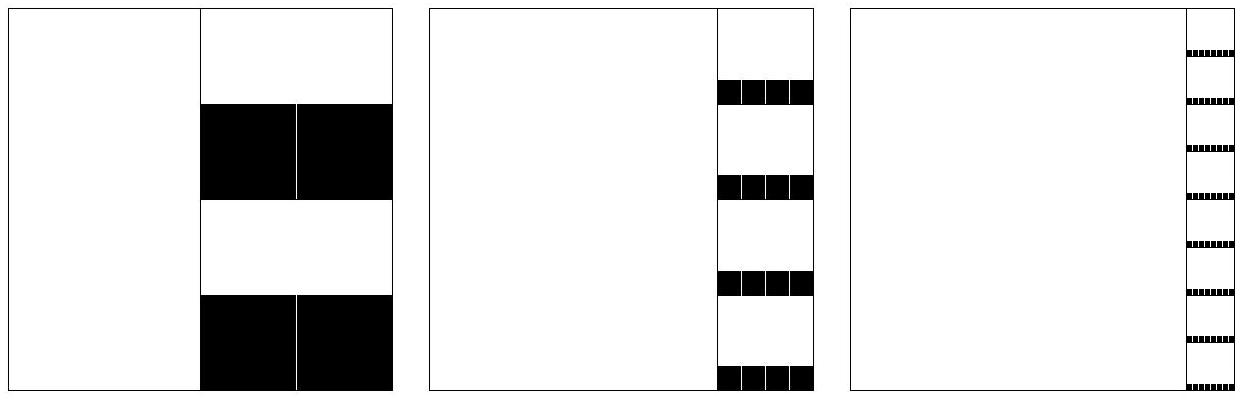}
	\caption{The defining patterns with $\eta = 1/4$, $1/16$ and $1/64$ respectively.}
\end{figure}

Let $\hat \eta_i  = \eta_1 \cdots \eta_i$ and observe that at the $i$th stage of the construction there are $\hat \eta_i ^{-1}$ squares of side length $\hat \eta_i$.  The key property which we will utilise is the following.  For $\alpha \in (0,1]$, define $T_\alpha: [0,1]^2 \to [0,1]^2$ by $T_\alpha(x,y) = (x, \alpha y)$.  For all $i \in \mathbb{N}$ we have, using the dyadic grid definition of $N_\delta$,
\begin{equation} \label{goodscaling}
N_{\hat \eta_i}\Big( T_{ \eta_i^{1/2}}(C)\Big) \geq \hat \eta_i^{-1}.
\end{equation}
To see this consider the $\hat \eta_i ^{-1}$ squares of side length $\hat \eta_i$ present at the $i$th stage of construction and note that when we scale vertically by $\eta_i^{1/2}$ each $\hat \eta_i$ square still contributes at least one to $N_{\hat \eta_i}$.
\\ \\
The homogeneous IFS used for this example will be the same as for the fractal comb $F_C^4$, i.e., a Bedford-McMullen carpet with $m=2$, $n=4$ and four mappings used, all corresponding to the left column.  Again, it is clear that
\[
\overline{\dim}_\text{B} F_\emptyset = 1 \qquad \text{and} \qquad \overline{\dim}_\text{B} \pi_1(F_\emptyset) = 0.
\]
As such, the lower bound for $\overline{\dim}_\text{B} F_C$ given by Theorem \ref{main4} is the solution $s$ of
\[
4 \, (1/2)^{1/2} \, (1/4)^{s-1/2} = 1
\]
which yields $\overline{\dim}_\text{B} F_C \geq s = 5/4$.  Note that this is already greater than the value given by (\ref{inhomexpected}) which is 1.  We now claim that in fact $\overline{\dim}_\text{B} F_C \geq 4/3$.
\\ \\
To see this, let $\hat \eta = \hat \eta_i$ and $\eta =  \eta_i$ for some $i$ and choose $k  \in \mathbb{N}$ such that $2^k = \eta^{-1/2}$ and let $\delta =\hat \eta \, \eta^{1/2}$.  By the definition of the $\eta_i$ we note $\eta = \hat \eta^{1-1/(i+1)}$.  We have
\begin{eqnarray*}
N_\delta(F_C) &\geq& \sum_{\mathcal{I}^k} N_\delta \big(S_{\textbf{\emph{i}}}(C) \big) \\ \\
&=& 4^k N_{\delta 2^k} \big(T_{2^{-k}}(C) \big) \qquad \text{by scaling up by $2^k$}  \\ \\
&=&\eta^{-1} N_{\hat \eta} \big(T_{\eta^{1/2}}(C) \big) \qquad \text{by the choice of $k$} \\ \\
&\geq&  \hat \eta^{-2+1/i} \qquad \text{by (\ref{goodscaling})}\\ \\
&=& \delta^{-\alpha}
\end{eqnarray*}
where
\[
\alpha \ = \ \frac{2-1/(i+1)}{3/2-1/(2i+2)} \ \to \  4/3 \qquad \text{as $i \to \infty$}
\]
and so letting $\delta$ tend to zero through the values $\hat \eta_i \, \eta_i^{1/2}$ proves the claim.  Of course, (A) is not satisfied for this construction.  Indeed, we note that $\dim_\text{A}\pi_1(C) = 1$, which can be shown by constructing a weak tangent to $\pi_1(C)$ at 1, see \cite[Proposition 2.1]{mackay}.

\begin{comment}

\subsection{Another example}

In this section we provide an example with a slightly more exotic looking structure.  Despite this, however, it is perhaps less interesting than the previous one as the relationship (\ref{inhomexpected}) holds.  The underlying homogeneous IFS will be in the Bara\'nski class and will consist of the mappings
\[
S_1 = \left( \begin{array}{cc}
\tfrac{3}{10} & 0\\ 
0& \tfrac{3}{10}
 \end{array} \right),
\]
\[
S_2 = \left( \begin{array}{cc}
\tfrac{3}{10} & 0\\
0& \tfrac{7}{10}
 \end{array} \right) +  \left( \begin{array}{c}
0\\
\tfrac{3}{10}
 \end{array} \right)
\]
and
\[
S_3 = \left( \begin{array}{cc}
\tfrac{7}{10} & 0\\
0& \tfrac{3}{10}
 \end{array} \right) +  \left( \begin{array}{c}
\tfrac{3}{10}\\
0
 \end{array} \right)
\]
and the condensation set will be the Sierpi\'nski triangle.

\begin{figure}[H]
	\centering
	\includegraphics[width=135mm]{inhomcarpet2}
	\caption{The homogeneous Bara\'nski type carpet described above (left) and the corresponding inhomogeneous carpet with condensation set based on the Sierpi\'nski triangle (right).}
\end{figure}

One can easily see that $s_A = s_B = s$ is the solution of
\[
\big(\tfrac{3}{10}\big)\big(\tfrac{3}{10}\big)^{s-1}+\big(\tfrac{3}{10}\big)\big(\tfrac{7}{10}\big)^{s-1}+\big(\tfrac{7}{10}\big)\big(\tfrac{3}{10}\big)^{s-1} = 1
\]
which is $s = 1.2647\dots$ and it follows from Theorem \ref{main4} that
\[
\underline{\dim}_\text{B} F_C = \overline{\dim}_\text{B} F_C = \max\{ s, \dim_\text{B} C \} = \max\{s , \log3/\log2 \} = \frac{\log3}{\log2} = 1.5849 \dots
\]
\end{comment}

\section{Proofs} \label{mainproof4}

\subsection{Preliminary results}

In this section we will introduce some notation and establish some simple estimates before beginning the main proofs.  For $\textbf{\emph{i}} =  (i_1, i_2, \dots, i_k ) \in \mathcal{I}^*$, let $c_\textbf{\emph{i}} = c_{i_1} \cdots c_{i_k}$, $d_\textbf{\emph{i}} = d_{i_1} \cdots d_{i_k}$, let $\alpha_1(\textbf{\emph{i}}) \geq \alpha_2(\textbf{\emph{i}})$ be the singular values of the map $S_\textbf{\emph{i}}$, i.e. $\alpha_1(\textbf{\emph{i}}) = \max\{ c_\textbf{\emph{i}}, d_\textbf{\emph{i}}\}$ and $\alpha_2(\textbf{\emph{i}}) = \min\{ c_\textbf{\emph{i}}, d_\textbf{\emph{i}}\}$, and $\alpha_{\min} = \min_{i \in \mathcal{I}} \alpha_2(i) \in (0,1)$.  Also, write
\[
\pi_{\textbf{\emph{i}}} = \left\{ \begin{array}{cc}
\pi_1 &  \text{if $c_\textbf{\emph{i}} \geq d_\textbf{\emph{i}}$}\\ 
\pi_2 &   \text{if $c_\textbf{\emph{i}} < d_\textbf{\emph{i}}$}
\end{array} \right. 
\]
\[
s_{\textbf{\emph{i}}}(F_\emptyset) = \left\{ \begin{array}{cc}
s_1(F_\emptyset)  &  \text{if $c_\textbf{\emph{i}} \geq d_\textbf{\emph{i}}$}\\ 
s_2(F_\emptyset)  &   \text{if $c_\textbf{\emph{i}} < d_\textbf{\emph{i}}$}
\end{array} \right. 
\]
\[
\overline{s}_{\textbf{\emph{i}}}(C) = \left\{ \begin{array}{cc}
\overline{s}_1(C)   &  \text{if $c_\textbf{\emph{i}} \geq d_\textbf{\emph{i}}$}\\ 
\overline{s}_2(C)  &   \text{if $c_\textbf{\emph{i}} < d_\textbf{\emph{i}}$}
\end{array} \right. 
\]
\[
\underline{s}_{\textbf{\emph{i}}}(C) = \left\{ \begin{array}{cc}
\underline{s}_1(C)   &  \text{if $c_\textbf{\emph{i}} \geq d_\textbf{\emph{i}}$}\\ 
\underline{s}_2(C)  &   \text{if $c_\textbf{\emph{i}} < d_\textbf{\emph{i}}$}
\end{array} \right. 
\]
The sets $\pi_1(\overline{\mathcal{O}})$ and $\pi_2(\overline{\mathcal{O}})$ are inhomogeneous self-similar sets with condensation sets $\pi_1(C)$ and $\pi_2(C)$ respectively.  The underlying IFSs (derived in the obvious way from the original IFS) satisfy the OSC and so it follows from \cite[Corollary 2.2]{me_inhom} that
\[
 \max\{s_1(F_\emptyset), \underline{s}_1(C)\} \ \leq \ \underline{\dim}_\text{B}\pi_1(\mathcal{O}) \ \leq \ \overline{\dim}_\text{B}\pi_1(\mathcal{O}) \ = \ \max\{s_1(F_\emptyset), \overline{s}_1(C)\}
\]
and
\[
 \max\{s_2(F_\emptyset), \underline{s}_2(C)\} \ \leq \ \underline{\dim}_\text{B}\pi_2(\mathcal{O}) \ \leq \ \overline{\dim}_\text{B}\pi_2(\mathcal{O}) \ = \ \max\{s_2(F_\emptyset), \overline{s}_2(C)\}.
\]
It follows that, for all $\varepsilon \in (0,1]$, there exists a constant $C_\varepsilon \geq 1$ such that for all $\textbf{\emph{i}} \in \mathcal{I}^*$ and all $\delta \in (0,\alpha_{\min}^{-1}]$ we have
\begin{equation} \label{boxorbproj}
C_\varepsilon^{-1} \delta^{ -\max\{ \underline{s}_\textbf{\emph{i}}(C), s_\textbf{\emph{i}}(F_\emptyset)\} + \varepsilon} \ \leq \ N_\delta \big(\pi_\textbf{\emph{i}}(\mathcal{O}) \big) \ \leq \ C_\varepsilon \delta^{-\max\{\overline{s}_\textbf{\emph{i}}(C), s_\textbf{\emph{i}}(F_\emptyset)\} - \varepsilon}
\end{equation}

For $\textbf{\emph{i}} = (i_1,i_2, \dots, i_{k-1}, i_k)  \in \mathcal{I}^*$, let $\overline{\textbf{\emph{i}}} = (i_1,i_2, \dots, i_{k-1}) \in \mathcal{I}^* \cup \{ \omega\}$, where $\omega$ is the empty word.  For notational convenience the map $S_\omega$ is taken to be the identity map.For $\delta \in (0,1]$ we define the $\delta$-\emph{stopping}, $\mathcal{I}_\delta$, as follows:
\[
\mathcal{I}_\delta = \big\{\textbf{\emph{i}} \in \mathcal{I}^* : \alpha_2(\textbf{\emph{i}}) < \delta \leq \alpha_2( \overline{\textbf{\emph{i}}}) \big\}.
\]
Note that for $\textbf{\emph{i}} \in \mathcal{I}_\delta$ we have
\begin{equation} \label{stoppingest}
\alpha_{\min} \, \delta \leq \alpha_2(\textbf{\emph{i}}) < \delta.
\end{equation}

\subsection{Proof of the lower bound for the lower box dimension in Theorem \ref{main4}}

In this section we will prove that if $F$ is in the Bara\'nski class, then $\max\{ \underline{s}_A, \underline{s}_B, \underline{\dim}_\text{B} C \}  \leq  \underline{\dim}_\text{B} F_C$.  The proof of the analogous inequality for the Lalley-Gatzouras class is similar and omitted.  Since lower box dimension is monotone, it suffices to show that $\max\{ \underline{s}_A, \underline{s}_B \}  \leq  \underline{\dim}_\text{B} F_C$, and we will assume without loss of generality that $\max\{ \underline{s}_A, \underline{s}_B \} = \underline{s}_A$.
\\ \\
Let $\varepsilon \in (0, \underline{s}_A)$, $\delta \in (0,1]$ and $U$ be any closed square of side length $\delta$.  Also, let
\[
M = \min\big\{n \in \mathbb{N} : n \geq \alpha_{\min}^{-1}+2\big\}.
\]
Since $\{S_\textbf{\emph{i}}\big([0,1]^2\big)\}_{\textbf{\emph{i}}\in \mathcal{I}_\delta}$ is a collection of pairwise disjoint open rectangles each with shortest side having length at least $\alpha_{\min} \delta$, it is clear that $U$ can intersect no more than $M^2$ of the sets $\{ S_i(\mathcal{O})\}_{\textbf{\emph{i}}\in \mathcal{I}_\delta}$ since $S_i(\mathcal{O}) \subseteq S_\textbf{\emph{i}}\big([0,1]^2\big)$ for all $\textbf{\emph{i}}\in \mathcal{I}_\delta$.  It follows that, using the $\delta$-mesh definition of $N_\delta$, we have
\[
\sum_{\textbf{\emph{i}}\in \mathcal{I}_\delta} N_\delta \big(S_\textbf{\emph{i}}(\mathcal{O})\big) \  \leq  \ M^2 \, N_\delta \Bigg(\bigcup_{\textbf{\emph{i}}\in \mathcal{I}_\delta} S_\textbf{\emph{i}}(\mathcal{O}) \Bigg)  \ \leq \  M^2 \, N_\delta(\mathcal{O}).
\]
This yields
\begin{eqnarray*}
N_\delta(\mathcal{O}) & \geq & \tfrac{1}{M^2} \  \sum_{\textbf{\emph{i}} \in \mathcal{I}_\delta} N_\delta \big(S_\textbf{\emph{i}}(\mathcal{O})\big) \\ \\
& = & \tfrac{1}{M^2} \   \sum_{\textbf{\emph{i}} \in \mathcal{I}_\delta} N_{\delta/\alpha_1(\textbf{\emph{i}})} \big(\pi_\textbf{\emph{i}}(\mathcal{O})\big)  \qquad \qquad \text{since $\alpha_2(\textbf{\emph{i}}) < \delta$}\\ \\
& \geq & \tfrac{1}{M^2} \   \sum_{\textbf{\emph{i}} \in \mathcal{I}_\delta} C_\varepsilon^{-1} \,\bigg( \frac{\alpha_1(\textbf{\emph{i}})}{\delta} \bigg)^{\max\{\underline{s}_\textbf{\emph{i}}(C), s_\textbf{\emph{i}}(F_\emptyset) \}-\varepsilon}  \qquad \qquad \text{by (\ref{boxorbproj})}\\ \\
& = & \tfrac{1}{M^2C_\varepsilon} \   \delta^{-\underline{s}_A+\varepsilon} \ \sum_{\textbf{\emph{i}} \in \mathcal{I}_\delta} \alpha_1(\textbf{\emph{i}})^{\max\{\underline{s}_\textbf{\emph{i}}(C), s_\textbf{\emph{i}}(F_\emptyset) \}} \delta^{\underline{s}_A - \max\{\underline{s}_\textbf{\emph{i}}(C), s_\textbf{\emph{i}}(F_\emptyset) \}} \\ \\
& \geq & \tfrac{1}{M^2C_\varepsilon} \   \delta^{-\underline{s}_A+\varepsilon} \ \sum_{\textbf{\emph{i}} \in \mathcal{I}_\delta} \alpha_1(\textbf{\emph{i}})^{\max\{\underline{s}_\textbf{\emph{i}}(C), s_\textbf{\emph{i}}(F_\emptyset) \}} \alpha_2(\textbf{\emph{i}})^{\underline{s}_A - \max\{\underline{s}_\textbf{\emph{i}}(C), s_\textbf{\emph{i}}(F_\emptyset) \}}
\end{eqnarray*}
by (\ref{stoppingest}).  We now claim that for all $\textbf{\emph{i}}\in \mathcal{I}_\delta$ we have
\[
\alpha_1(\textbf{\emph{i}})^{\max\{\underline{s}_\textbf{\emph{i}}(C), s_\textbf{\emph{i}}(F_\emptyset) \}} \alpha_2(\textbf{\emph{i}})^{\underline{s}_A - \max\{\underline{s}_\textbf{\emph{i}}(C), s_\textbf{\emph{i}}(F_\emptyset) \}} \ \geq \ c_\textbf{\emph{i}}^{\max\{\underline{s}_1(C), s_1(F_\emptyset) \}} d_\textbf{\emph{i}}^{\underline{s}_A - \max\{\underline{s}_1(C), s_1(F_\emptyset) \}}.
\]
If $c_\textbf{\emph{i}} \geq d_\textbf{\emph{i}}$, then we trivially have equality, so assume that $c_\textbf{\emph{i}} < d_\textbf{\emph{i}}$, in which case
\newpage
\begin{eqnarray*}
\alpha_1(\textbf{\emph{i}})^{\max\{\underline{s}_\textbf{\emph{i}}(C), s_\textbf{\emph{i}}(F_\emptyset) \}} \alpha_2(\textbf{\emph{i}})^{\underline{s}_A - \max\{\underline{s}_\textbf{\emph{i}}(C), s_\textbf{\emph{i}}(F_\emptyset) \}} &=& d_\textbf{\emph{i}}^{\max\{\underline{s}_2(C), s_2(F_\emptyset) \}} c_\textbf{\emph{i}}^{\underline{s}_A - \max\{\underline{s}_2(C), s_2(F_\emptyset) \}} \\ \\
&=& c_\textbf{\emph{i}}^{\max\{\underline{s}_1(C), s_1(F_\emptyset) \}} d_\textbf{\emph{i}}^{\underline{s}_A - \max\{\underline{s}_1(C), s_1(F_\emptyset) \}} \\ \\
&\,& \qquad \quad \cdot \bigg( \frac{d_\textbf{\emph{i}}}{c_\textbf{\emph{i}}}\bigg)^{ \max\{\underline{s}_1(C), s_1(F_\emptyset) \} + \max\{\underline{s}_2(C), s_2(F_\emptyset) \} -\underline{s}_A } \\ \\
&\geq& c_\textbf{\emph{i}}^{\max\{\underline{s}_1(C), s_1(F_\emptyset) \}} d_\textbf{\emph{i}}^{\underline{s}_A - \max\{\underline{s}_1(C), s_1(F_\emptyset) \}}
\end{eqnarray*}
since it is easily seen that $\underline{s}_A \leq  \max\{\underline{s}_1(C), s_1(F_\emptyset) \} + \max\{\underline{s}_2(C), s_2(F_\emptyset) \}$.  Combining this with the above estimate yields
\begin{eqnarray*}
N_\delta(\mathcal{O}) & \geq &  \tfrac{1}{M^2 C_\varepsilon } \  \delta^{-\underline{s}_A+\varepsilon} \sum_{\textbf{\emph{i}} \in \mathcal{I}_\delta } c_\textbf{\emph{i}}^{\max\{\underline{s}_1(C), s_1(F_\emptyset) \}} d_\textbf{\emph{i}}^{\underline{s}_A - \max\{\underline{s}_1(C), s_1(F_\emptyset) \}}\\ \\
& = & \tfrac{1}{M^2C_\varepsilon } \  \delta^{-(\underline{s}_A-\varepsilon)}
\end{eqnarray*}
by repeated application of the definition of $\underline{s}_A$.  This proves that $\underline{\dim}_\text{B} F_C = \underline{\dim}_\text{B} \mathcal{O} \geq  \underline{s}_A - \varepsilon$ and letting $\varepsilon$ tend to zero gives the desired lower bound. \hfill \qed

\subsection{Proof of the upper bound for the upper box dimension in Theorem \ref{main4}}

In this section we will prove that if $F_C$ is in the Bara\'nski class and satisfies assumption (A), then $\overline{\dim}_\text{B} F_C  \leq   \max\{ \overline{s}_A, \overline{s}_B,  \overline{\dim}_\text{B} C \}$.  The proof of the analogous inequality for the Lalley-Gatzouras class is similar and omitted.  Let $s = \max\{ \overline{s}_A, \overline{s}_B,  \overline{\dim}_\text{B} C \}$ and $\varepsilon>0$.  Since $F_C = \overline{\mathcal{O}}$ and upper box dimension is stable under taking closures, it suffices to estimate $\overline{\dim}_\text{B} \mathcal{O}$.  We have
\begin{eqnarray*}
N_\delta(\mathcal{O}) & = & N_\delta \Bigg( \bigcup_{\textbf{\emph{i}} \in \mathcal{I}_\delta} S_\textbf{\emph{i}}(\mathcal{O}) \ \cup \ \bigcup_{ \substack{\textbf{\emph{i}} \in \mathcal{I}^* \cup\{ \omega\}: \\ \\
\alpha_2(\textbf{\emph{i}}) > \delta}} S_\textbf{\emph{i}}(C) \Bigg)   \\ \\
& \leq &  \sum_{\textbf{\emph{i}} \in \mathcal{I}_\delta} N_\delta \big(S_\textbf{\emph{i}}(\mathcal{O}) \big) \ + \ \sum_{ \substack{\textbf{\emph{i}} \in \mathcal{I}^* \cup\{ \omega\}: \\ \\
\alpha_2(\textbf{\emph{i}}) > \delta}} N_\delta \big(S_\textbf{\emph{i}}(C) \big).
\end{eqnarray*}
We will analyse these two terms separately.  For the first term
\begin{eqnarray*}
 \sum_{\textbf{\emph{i}} \in \mathcal{I}_\delta} N_\delta \big(S_i(\mathcal{O})\big) & = &  \sum_{\textbf{\emph{i}} \in \mathcal{I}_\delta} N_{\delta/\alpha_1(\textbf{\emph{i}})} \big(\pi_\textbf{\emph{i}}(\mathcal{O})\big) \qquad \qquad \text{since $\alpha_2(\textbf{\emph{i}}) < \delta$} \\ \\
& \leq &   \sum_{\textbf{\emph{i}} \in \mathcal{I}_\delta} C_\varepsilon \bigg( \frac{\alpha_1(\textbf{\emph{i}})}{\delta} \bigg)^{\max\{\overline{s}_\textbf{\emph{i}}(C), s_\textbf{\emph{i}}(F_\emptyset) \}+\varepsilon} \qquad \qquad \text{by (\ref{boxorbproj})}\\ \\
& =&  C_\varepsilon \,  \delta^{-s-\varepsilon} \ \sum_{\textbf{\emph{i}} \in \mathcal{I}_\delta} \alpha_1(\textbf{\emph{i}})^{\max\{\overline{s}_\textbf{\emph{i}}(C), s_\textbf{\emph{i}}(F_\emptyset) \}} \delta^{s - \max\{\overline{s}_\textbf{\emph{i}}(C), s_\textbf{\emph{i}}(F_\emptyset) \}}\\ \\
& \leq &  C_\varepsilon \, \alpha_{\min}^{-2} \, \delta^{-s-\varepsilon} \ \sum_{\textbf{\emph{i}} \in \mathcal{I}_\delta} \alpha_1(\textbf{\emph{i}})^{\max\{\overline{s}_\textbf{\emph{i}}(C), s_\textbf{\emph{i}}(F_\emptyset) \}} \alpha_2(\textbf{\emph{i}})^{s - \max\{\overline{s}_\textbf{\emph{i}}(C), s_\textbf{\emph{i}}(F_\emptyset) \}} \qquad \qquad \text{by (\ref{stoppingest})} \\ \\
& \leq &  C_\varepsilon \,  \alpha_{\min}^{-2} \, \delta^{-s-\varepsilon} \ \Bigg( \sum_{\textbf{\emph{i}} \in \mathcal{I}_\delta} c_\textbf{\emph{i}}^{\max\{\overline{s}_1(C), s_1(F_\emptyset) \}} d_\textbf{\emph{i}}^{\overline{s}_A - \max\{\overline{s}_1(C), s_1(F_\emptyset) \}}\\ \\
&\,& \qquad \qquad \qquad \qquad + \ \sum_{\textbf{\emph{i}} \in \mathcal{I}_\delta} d_\textbf{\emph{i}}^{\max\{\overline{s}_2(C), s_2(F_\emptyset) \}} c_\textbf{\emph{i}}^{\overline{s}_B - \max\{\overline{s}_2(C), s_2(F_\emptyset) \}} \Bigg)\\ \\
& \leq & \  2 \, C_\varepsilon \,  \alpha_{\min}^{-2} \, \delta^{-(s+\varepsilon)}
\end{eqnarray*}
by repeated application of the definitions of $\overline{s}_A$ and $\overline{s}_B$.  The second term is awkward as we have to estimate $N_\delta \big(S_\textbf{\emph{i}}(C) \big)$ for $\textbf{\emph{i}}$ with various different values of $\alpha_2(\textbf{\emph{i}}) > \delta$.  This is the only occasion in the proof where we require assumption (A).
\begin{lma} \label{assumptionAlemma}
Assume (A), let $\varepsilon>0$ and let $\delta \in (0,1]$.  There exists a constant $D_\varepsilon>0$ such that for all $\textbf{i} \in \mathcal{I}^* \cup\{ \omega\}$ such that $\alpha_2(\textbf{i}) > \delta$, we have
\[
N_\delta \big(S_\textbf{i}(C) \big) \ \leq \ D_\varepsilon \,  \delta^{-(s+\varepsilon)} \, \alpha_1(\textbf{i})^{\max\{\overline{s}_\textbf{i}(C), s_\textbf{i}(F_\emptyset) \}} \alpha_2(\textbf{i})^{s+\varepsilon/2-\max\{\overline{s}_\textbf{i}(C), s_\textbf{i}(F_\emptyset) \}}
\]
\end{lma}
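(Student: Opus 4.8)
The plan is to estimate $N_\delta(S_\textbf{\emph{i}}(C))$ directly from the geometry of the affine map $S_\textbf{\emph{i}}$, which scales by $c_\textbf{\emph{i}}$ horizontally and $d_\textbf{\emph{i}}$ vertically. Fix $\textbf{\emph{i}}$ with $\alpha_2(\textbf{\emph{i}}) > \delta$ and assume without loss of generality that $c_\textbf{\emph{i}} \geq d_\textbf{\emph{i}}$, so that $\alpha_1(\textbf{\emph{i}}) = c_\textbf{\emph{i}}$, $\alpha_2(\textbf{\emph{i}}) = d_\textbf{\emph{i}}$, $\pi_\textbf{\emph{i}} = \pi_1$ and $s_\textbf{\emph{i}}(F_\emptyset) = s_1(F_\emptyset)$, $\overline{s}_\textbf{\emph{i}}(C) = \overline{s}_1(C)$; the other case is symmetric. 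Since $S_\textbf{\emph{i}}(C)$ lies in a rectangle of dimensions $c_\textbf{\emph{i}} \times d_\textbf{\emph{i}}$ with $\delta < d_\textbf{\emph{i}} \leq c_\textbf{\emph{i}}$, a natural covering of $S_\textbf{\emph{i}}(C)$ at scale $\delta$ is obtained by first covering the projection $\pi_1(S_\textbf{\emph{i}}(C))$, which is a scaled copy of $\pi_1(C)$ by the factor $c_\textbf{\emph{i}}$, by roughly $N_{\delta/c_\textbf{\emph{i}}}(\pi_1(C))$ intervals of length $\delta$, and then, above each such interval, covering the relevant vertical slice of $S_\textbf{\emph{i}}(C)$. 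The key point is that each vertical slice has extent at most $d_\textbf{\emph{i}}$, so it needs at most $\lceil d_\textbf{\emph{i}}/\delta \rceil \leq 2 d_\textbf{\emph{i}}/\delta$ squares of side $\delta$ (using $d_\textbf{\emph{i}} > \delta$). This gives the crude bound
\[
N_\delta\big(S_\textbf{\emph{i}}(C)\big) \ \leq \ 2 \, \frac{d_\textbf{\emph{i}}}{\delta} \, N_{\delta/c_\textbf{\emph{i}}}\big(\pi_1(C)\big).
\]

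This crude bound is not quite strong enough on its own: $N_{\delta/c_\textbf{\emph{i}}}(\pi_1(C))$ is controlled by the \emph{upper} box dimension $\overline{s}_1(C)$ only for $\delta/c_\textbf{\emph{i}}$ small, and we have no uniform control at a single scale. This is exactly where Assumption (A) enters. The idea is to interpolate between two scales: we know from the definition of upper box dimension that there is a scale $\rho_\varepsilon > 0$ (depending only on $\varepsilon$) and a constant so that $N_r(\pi_1(C)) \leq r^{-\overline{s}_1(C) - \varepsilon/4}$ for all $r \leq \rho_\varepsilon$; and for scales between $\delta/c_\textbf{\emph{i}}$ and $\rho_\varepsilon$ we use the Assouad-dimension bound supplied by (A), namely $\dim_\text{A}\pi_1(C) \leq \max\{\overline{s}_1(C), s_1(F_\emptyset)\} =: t_1$. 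Concretely, writing $r = \delta/c_\textbf{\emph{i}}$, cover $\pi_1(C)$ first at scale $\min\{r', \rho_\varepsilon\}$ for a suitable intermediate $r'$ and then refine each piece to scale $r$ using the Assouad bound, which contributes a factor $\leq \mathrm{const} \cdot (r'/r)^{t_1}$ per piece. Optimising (or simply taking $r' = \rho_\varepsilon$ when $r < \rho_\varepsilon$, and handling $r \geq \rho_\varepsilon$ trivially since then $c_\textbf{\emph{i}} \leq \delta/\rho_\varepsilon$ is bounded below) yields, after absorbing $\varepsilon$-losses into a constant $D_\varepsilon$,
\[
N_{\delta/c_\textbf{\emph{i}}}\big(\pi_1(C)\big) \ \leq \ D_\varepsilon' \, \Big(\frac{c_\textbf{\emph{i}}}{\delta}\Big)^{t_1 + \varepsilon/4}.
\]

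Substituting this into the crude bound gives
\[
N_\delta\big(S_\textbf{\emph{i}}(C)\big) \ \leq \ 2 D_\varepsilon' \, \frac{d_\textbf{\emph{i}}}{\delta} \, \Big(\frac{c_\textbf{\emph{i}}}{\delta}\Big)^{t_1+\varepsilon/4} \ = \ 2 D_\varepsilon' \, \delta^{-1-t_1-\varepsilon/4} \, c_\textbf{\emph{i}}^{\,t_1+\varepsilon/4} \, d_\textbf{\emph{i}}.
\]
It remains to rearrange this into the exact shape claimed. We have $\alpha_1(\textbf{\emph{i}})^{\max\{\overline{s}_\textbf{\emph{i}}(C),s_\textbf{\emph{i}}(F_\emptyset)\}} = c_\textbf{\emph{i}}^{\,t_1}$, and the remaining factor to match is $\alpha_2(\textbf{\emph{i}})^{s+\varepsilon/2 - t_1} = d_\textbf{\emph{i}}^{\,s+\varepsilon/2-t_1}$. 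Since $t_1 = \max\{\overline{s}_1(C), s_1(F_\emptyset)\} \leq s$ and $\overline{s}_A \leq t_1 + 1$ (as $\overline{s}_A$ is at most the box dimension of a rectangle sliced appropriately — more precisely this follows directly from the defining equation for $\overline{s}_A$), one checks that $s \leq t_1 + 1$, hence $s + \varepsilon/2 - t_1 \in (0,1]$ for $\varepsilon$ small; because $d_\textbf{\emph{i}} \leq 1$, we may bound $d_\textbf{\emph{i}} \leq d_\textbf{\emph{i}}^{\,s+\varepsilon/2-t_1}$ only if the exponent is $\leq 1$, which it is. The leftover powers of $\delta$ and $c_\textbf{\emph{i}}$ are reconciled using $\delta < d_\textbf{\emph{i}} \leq c_\textbf{\emph{i}} \leq 1$ and absorbing the $\varepsilon/4$ discrepancies into $D_\varepsilon$ at the cost of replacing $\varepsilon/4$ by $\varepsilon/2$ in the exponent of $\alpha_2$ and keeping $\varepsilon$ in the exponent of $\delta$. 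The main obstacle is the interpolation step in the second paragraph: one must set up the two-scale covering argument so that the Assouad bound is applied only on the range of scales where (A) is actually needed, and verify that the boundary cases ($c_\textbf{\emph{i}}$ large, or $\delta/c_\textbf{\emph{i}} \geq \rho_\varepsilon$) do not spoil the estimate — everything else is bookkeeping with the exponents, using $\delta < \alpha_2(\textbf{\emph{i}}) \leq \alpha_1(\textbf{\emph{i}}) \leq 1$ to trade powers freely.
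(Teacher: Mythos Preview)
Your crude bound
\[
N_\delta\big(S_\textbf{\emph{i}}(C)\big) \ \leq \ 2\,\frac{d_\textbf{\emph{i}}}{\delta}\,N_{\delta/c_\textbf{\emph{i}}}\big(\pi_1(C)\big)
\]
is genuinely too weak, and the rearrangement in your final paragraph does not go through. The problem is that this bound treats every vertical $\delta$-strip of $S_\textbf{\emph{i}}(C)$ as if it had full height $d_\textbf{\emph{i}}$, discarding all information about the two-dimensional structure of $C$. Concretely, take the fractal comb $F_C^4$ from the paper (four maps in one column with $c_i = 1/2$, $d_i = 1/4$, and $C = [0,1]\times\{0\}$). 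Here $t_1 = 1$, $s = \overline{s}_A = 3/2$, so the lemma claims $N_\delta(S_\textbf{\emph{i}}(C)) \lesssim \delta^{-3/2-\varepsilon} c_\textbf{\emph{i}}\, d_\textbf{\emph{i}}^{\,1/2+\varepsilon/2}$. But $S_\textbf{\emph{i}}(C)$ is a segment of length $c_\textbf{\emph{i}}$, so your crude bound gives $\approx c_\textbf{\emph{i}} d_\textbf{\emph{i}}/\delta^2$, and to pass from this to the claimed form you would need $d_\textbf{\emph{i}}^{1/2} \lesssim \delta^{1/2}$, which fails whenever $d_\textbf{\emph{i}} \gg \delta$. (One can check directly that summing your bound over all $\textbf{\emph{i}}$ with $\alpha_2(\textbf{\emph{i}}) > \delta$ only yields $\overline{\dim}_\text{B} F_C \leq 2$ in this example, so the weakness is fatal, not cosmetic.) Your bookkeeping claim that ``$d_\textbf{\emph{i}} \leq d_\textbf{\emph{i}}^{\,s+\varepsilon/2-t_1}$'' together with the $\delta$-powers match up fails exactly because $1 + t_1 - s > 0$ in general.

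You have also misidentified where (A) enters. The estimate $N_r(\pi_1(C)) \leq C_\varepsilon r^{-\overline{s}_1(C)-\varepsilon}$ for all $r\in(0,1]$ follows immediately from the definition of upper box dimension (the constant absorbs the behaviour at large $r$); no interpolation or Assouad input is needed there. The paper's argument is structurally different: one first covers $C$ itself, at scale $\delta/\alpha_2(\textbf{\emph{i}})$, using $\overline{\dim}_\text{B} C \leq s$, obtaining roughly $(\alpha_2(\textbf{\emph{i}})/\delta)^{s+\varepsilon}$ balls. Pushing forward by $S_\textbf{\emph{i}}$ gives ellipses with minor axis $\delta$ and major axis $\delta\,\alpha_1(\textbf{\emph{i}})/\alpha_2(\textbf{\emph{i}})$; it is in refining \emph{each such ellipse} down to scale $\delta$ in the long direction that one needs to cover a piece of $\pi_\textbf{\emph{i}}(C)$ of diameter $R = \delta\,\alpha_1/\alpha_2$ by $\delta$-intervals --- a relative covering at two scales $r<R$, which is precisely what the Assouad dimension controls and where (A) is used. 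The missing idea in your plan is that first covering step using the full $\overline{\dim}_\text{B} C$, not just the projection.
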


\begin{proof}
First take a cover of $C$ by fewer than
\[
D_{1,\varepsilon} \bigg(\frac{\alpha_2(\textbf{\emph{i}})}{\delta}\bigg)^{s+\varepsilon}
\]
balls of diameter $\delta/\alpha_2(\textbf{\emph{i}})$, where $D_{1,\varepsilon}$ is a universal constant depending only on $\varepsilon$.  Taking images of these sets under $S_\textbf{\emph{i}}$ gives a cover of $S_\textbf{\emph{i}}(C)$ by ellipses with minor axis $\delta$ and major axis $\delta \alpha_1(\textbf{\emph{i}})/\alpha_2(\textbf{\emph{i}})$.  Projecting each of these ellipses under $\pi_\textbf{\emph{i}}$ gives an interval of length $\delta \alpha_1(\textbf{\emph{i}})/\alpha_2(\textbf{\emph{i}})$, the intersection of which with $\pi_\textbf{\emph{i}}\big(S_\textbf{\emph{i}}(C)\big)$ may be covered by fewer than
\[
D_{2,\varepsilon} \, \bigg(\frac{\delta \alpha_1(\textbf{\emph{i}})/\alpha_2(\textbf{\emph{i}})}{\delta}\bigg)^{\dim_\text{A}\pi_\textbf{\emph{i}}(C)+\varepsilon/2} \ = \ D_{2,\varepsilon} \,\bigg(\frac{\alpha_1(\textbf{\emph{i}})}{\alpha_2(\textbf{\emph{i}})}\bigg)^{\dim_\text{A}\pi_\textbf{\emph{i}}(C)+\varepsilon/2}
\]
intervals of radius $\delta$, where $D_{2,\varepsilon}$ is a universal constant depending only on $\varepsilon$.  Pulling each of these intervals back up to $S_\textbf{\emph{i}}(C)$ and applying assumption (A) gives a $\delta$ cover of $S_\textbf{\emph{i}}(C)$ by fewer than

\begin{eqnarray*}
&\, & D_{1,\varepsilon} \bigg(\frac{\alpha_2(\textbf{\emph{i}})}{\delta}\bigg)^{s+\varepsilon} \ D_{2,\varepsilon} \,\bigg(\frac{\alpha_1(\textbf{\emph{i}})}{\alpha_2(\textbf{\emph{i}})}\bigg)^{\dim_\text{A}\pi_\textbf{\emph{i}}(C)+\varepsilon/2} \\ \\
&\,& \qquad  \leq  D_{1,\varepsilon} \, D_{2,\varepsilon} \,  \delta^{-(s+\varepsilon)} \, \alpha_1(\textbf{\emph{i}})^{\max\{\overline{s}_\textbf{\emph{i}}(C), s_\textbf{\emph{i}}(F_\emptyset) \}} \alpha_2(\textbf{\emph{i}})^{s+\varepsilon/2-\max\{\overline{s}_\textbf{\emph{i}}(C), s_\textbf{\emph{\emph{i}}}(F_\emptyset) \}}
\end{eqnarray*}
which proves the lemma.
\end{proof}

We can now estimate the awkward second term.  We have
\begin{eqnarray*}
\sum_{ \substack{\textbf{\emph{i}} \in \mathcal{I}^* \cup\{ \omega\}: \\ \\
\alpha_2(\textbf{\emph{i}}) > \delta}} N_\delta \big(S_\textbf{\emph{i}}(C) \big) & \leq & D_\varepsilon \,  \delta^{-(s+\varepsilon)}  \sum_{ \substack{\textbf{\emph{i}} \in \mathcal{I}^* \cup\{ \omega\}: \\ \\
\alpha_2(\textbf{\emph{i}}) > \delta}}  \alpha_1(\textbf{\emph{i}})^{\max\{\overline{s}_\textbf{\emph{i}}(C), s_\textbf{\emph{i}}(F_\emptyset) \}} \alpha_2(\textbf{\emph{i}})^{s+\varepsilon/2-\max\{\overline{s}_\textbf{\emph{i}}(C), s_\textbf{\emph{i}}(F_\emptyset) \}}  \\ \\
&\,& \qquad \qquad \qquad \qquad \qquad \qquad \text{by Lemma \ref{assumptionAlemma}} \\ \\
& \leq & D_\varepsilon \,  \delta^{-(s+\varepsilon)}  \sum_{k = 0}^{\infty} \alpha_{\max}^{k\varepsilon/2} \ \Bigg( \sum_{\textbf{\emph{i}} \in \mathcal{I}^k} c_\textbf{\emph{i}}^{\max\{\overline{s}_1(C), s_1(F_\emptyset) \}} d_\textbf{\emph{i}}^{\overline{s}_A - \max\{\overline{s}_1(C), s_1(F_\emptyset) \}}\\ \\
&\,& \qquad \qquad \qquad \qquad \quad \qquad + \ \sum_{\textbf{\emph{i}} \in \mathcal{I}^k} d_\textbf{\emph{i}}^{\max\{\overline{s}_2(C), s_2(F_\emptyset) \}} c_\textbf{\emph{i}}^{\overline{s}_B - \max\{\overline{s}_2(C), s_2(F_\emptyset) \}} \Bigg)\\ \\
& \leq & 2 \, D_\varepsilon \,  \delta^{-(s+\varepsilon)}  \sum_{k = 0}^{\infty} \Big(\alpha_{\max}^{\varepsilon/2}\Big)^k  \qquad \qquad \text{by the definitions of $\overline{s}_A$ and $\overline{s}_B$}\\ \\
& \leq & \frac{2 \, D_\varepsilon}{1- \alpha_{\max}^{\varepsilon/2}} \, \delta^{-(s+\varepsilon)}.
\end{eqnarray*}
Combining the two estimates given above yields
\[
N_\delta(\mathcal{O}) \ \leq \ \Bigg( 2 \, C_\varepsilon \,  \alpha_{\min}^{-2}  \ + \ \frac{2 \, D_\varepsilon}{1- \alpha_{\max}^{\varepsilon/2}} \Bigg) \, \delta^{-(s+\varepsilon)}
\]
which proves that $\overline{\dim}_\text{B} F_C  = \overline{\dim}_\text{B} \mathcal{O} \leq s + \varepsilon$ and letting $\varepsilon$ tend to zero gives the desired upper bound.

\subsection{Proof of the lower bound for the upper box dimension in Theorem \ref{main4}}

In this section we will prove the lower bounds for the the upper box dimension of inhomogeneous self-affine carpets, which, combined with the upper bound in the previous section, yields a precise formula.  We will begin by proving the result in a special case.

\begin{prop} \label{bedmcprop}
Let $F_C$ be in the Lalley-Gatzouras class and assume that $c_i = c \geq d = d_i$ for all $i \in \mathcal{I}$.  Then
\[
\overline{\dim}_\text{\emph{B}} F_C  \geq   \max\{ \overline{s}_A,  \overline{\dim}_\text{\emph{B}} C \}.
\]
\end{prop}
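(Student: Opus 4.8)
The plan is to exploit the special structure: since $c_i = c \ge d = d_i$ for all $i$, after $k$ applications of the IFS every composition map $S_{\textbf{\emph{i}}}$ with $\textbf{\emph{i}} \in \mathcal{I}^k$ is a contraction by $c^k$ horizontally and $d^k$ vertically, so all the rectangles at generation $k$ have the same dimensions $c^k \times d^k$ and are pairwise disjoint (the underlying Lalley-Gatzouras IFS satisfies the OSC). Since lower bounds for box dimension are monotone and $\overline{\dim}_\text{B} C \le \overline{\dim}_\text{B} F_C$ is immediate from $C \subseteq F_C$, it suffices to prove $\overline{\dim}_\text{B} F_C \ge \overline{s}_A$. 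Fix $\varepsilon > 0$; I would find a sequence of scales $\delta \to 0$ at which $N_\delta(F_C) \ge \delta^{-(\overline{s}_A - \varepsilon)}$.

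First I would choose the scales driven by the condensation set: since $\overline{\dim}_\text{B}\pi_1(C) = \overline{s}_1(C)$ is a limsup, there is a sequence $r_j \to 0$ with $N_{r_j}(\pi_1(C)) \ge r_j^{-(\overline{s}_1(C) - \varepsilon)}$, and hence $N_{r_j}(\pi_1(\mathcal{O})) \ge r_j^{-(\overline{s}_1(C)-\varepsilon)}$ as well; combined with the homogeneous lower bound $N_r(\pi_1(\mathcal{O})) \gtrsim r^{-s_1(F_\emptyset)}$ (valid at \emph{all} small scales since $\pi_1(F_\emptyset)$ is a self-similar set satisfying the OSC) this gives a sequence of scales along which $N_r(\pi_1(\mathcal{O})) \gtrsim r^{-(\max\{\overline{s}_1(C), s_1(F_\emptyset)\} - \varepsilon)}$. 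Call this exponent $t = \max\{\overline{s}_1(C), s_1(F_\emptyset)\}$, so that $\overline{s}_A$ solves $\sum_i c^t d^{\overline{s}_A - t} = |\mathcal{I}|\, c^t d^{\overline{s}_A - t} = 1$. Now for such an $r$ and for each $k$, set $\delta = d^k r$ (note $\delta < c^k r \le$ the short side of a generation-$k$ rectangle as soon as $r$ is small, so the rectangles are resolved at scale $\delta$). Because the generation-$k$ rectangles are disjoint, $N_\delta(F_C) \ge \sum_{\textbf{\emph{i}} \in \mathcal{I}^k} N_\delta(S_{\textbf{\emph{i}}}(\mathcal{O}))$; each term, by scaling the rectangle back up and projecting to its long ($\pi_1$) direction, is at least $N_{\delta/c^k}(\pi_1(\mathcal{O})) = N_{r (d/c)^k}(\pi_1(\mathcal{O}))$. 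Here I would need the scale $r(d/c)^k$ to still be one of the "good" scales for $\pi_1(\mathcal{O})$ — which is automatic when $c = d$ (Bedford-McMullen case), and when $c > d$ requires only the homogeneous bound $N_\rho(\pi_1(\mathcal{O})) \gtrsim \rho^{-s_1(F_\emptyset)}$, so I would handle the two subcases $t = s_1(F_\emptyset)$ and $t = \overline{s}_1(C)$ separately, in the latter also arranging $r$ small enough that $r(d/c)^k$ beats the relevant threshold. This yields
\[
N_\delta(F_C) \ \gtrsim \ |\mathcal{I}|^k \left( r (d/c)^k \right)^{-(t - \varepsilon)} \ = \ \left( |\mathcal{I}|\, (c/d)^{t} \right)^k r^{-(t-\varepsilon)} (d/c)^{-k\varepsilon\,(\text{sign adjustment})},
\]
and since $|\mathcal{I}| = c^{-t} d^{t - \overline{s}_A}$ we get $|\mathcal{I}| (c/d)^t = d^{-\overline{s}_A}$, so $N_\delta(F_C) \gtrsim d^{-k\overline{s}_A} r^{-(t-\varepsilon)}$ while $\delta = d^k r$; comparing exponents, $\log N_\delta(F_C) / (-\log \delta) \to \overline{s}_A$ as first $k \to \infty$ and then $\varepsilon \to 0$ (the $r^{-(t-\varepsilon)}$ factor is negligible for fixed $r$ as $k \to \infty$, and one then lets $r$ run through the good sequence to push $\varepsilon$ down).

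The main obstacle I anticipate is bookkeeping the two regimes cleanly: when $\overline{s}_1(C) > s_1(F_\emptyset)$ the good scales for $\pi_1(\mathcal{O})$ form only a sparse sequence $\{r_j\}$, and the construction multiplies them by $(d/c)^k$, so I must ensure $r(d/c)^k$ lands where the strong bound for $\pi_1(\mathcal{O})$ holds — this is where the inhomogeneous self-similar estimate \eqref{boxorbproj} is used, noting that its \emph{upper} exponent governs the general upper bound but the \emph{lower} exponent $\max\{\underline{s}_1(C), s_1(F_\emptyset)\}$ holds at all scales, so at minimum one recovers a bound with $\underline{s}_1(C)$ in place of $\overline{s}_1(C)$ along \emph{arbitrary} $k$, and the $\overline{s}_1(C)$ improvement along the sparse sequence $r_j$ itself (taking $k$ fixed or slowly growing). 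I expect that taking $\delta = d^k r_j$ with $k$ the dominant index and $r_j$ chosen afterwards, exactly as in the displayed computation, resolves this; the $c=d$ subcase is the truly clean one and likely the only one needed for the Bedford-McMullen application in Section~\ref{combs}, with the $c > d$ case following by the same scaling once the homogeneous-at-all-scales lower bound for $\pi_1$ is invoked.
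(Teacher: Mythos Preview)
Your core strategy---separate the generation-$k$ rectangles, use their disjointness, and reduce to a lower bound for $N$ applied to the projection $\pi_1(\mathcal{O})$---is exactly the paper's. However, your execution has a genuine gap in the case $c>d$ and $\overline{s}_1(C)>s_1(F_\emptyset)$. You introduce two independent parameters, a good scale $r_j$ for $\pi_1(\mathcal{O})$ and a generation $k$, set $\delta=d^k r_j$, and then need the \emph{derived} scale $r_j(d/c)^k$ also to be good for $\pi_1(\mathcal{O})$. You correctly flag this as the main obstacle, but none of your proposed fixes close it: the homogeneous all-scales bound only yields exponent $s_1(F_\emptyset)<t$; invoking the lower estimate in (\ref{boxorbproj}) gives only $\underline{s}_1(C)$ and hence $\underline{s}_A$, not $\overline{s}_A$; and letting ``$k$ be the dominant index with $r_j$ chosen afterwards'' does nothing to force $r_j(d/c)^k$ onto the sparse sequence of good scales (choosing $r_j$ good makes $r_j$ good, not $r_j(d/c)^k$).

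The paper's resolution is to drop the extra parameter entirely. One first observes that $\pi_1(\mathcal{O})$ is itself an inhomogeneous self-similar set whose underlying IFS satisfies the OSC, so by the self-similar result one has the \emph{equality} $\overline{\dim}_\text{B}\pi_1(\mathcal{O})=\max\{\overline{s}_1(C),s_1(F_\emptyset)\}$. Hence for every $\varepsilon>0$ there are infinitely many $k$ for which the single scale $(d/c)^k$ is already good for $\pi_1(\mathcal{O})$ (any sequence realising the $\limsup$ can be approximated within a bounded factor by a power of $d/c$). One then takes $\delta=d^k$ for such $k$; the projected scale is automatically $(d/c)^k$, the synchronisation problem never arises, and your displayed computation goes through verbatim to give $N_{d^k}(\mathcal{O})\gtrsim (d^k)^{-(\overline{s}_A-\varepsilon)}$. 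In short, your argument becomes the paper's once you set $r=1$ and select $k$ in this way.
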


\begin{proof}
Since upper box dimension is monotone, it suffices to show that $\overline{\dim}_\text{B} F_C  \geq  \overline{s}_A$.  Since $\pi_1(\mathcal{O})$ is an inhomogeneous self-similar set which satisfies the OSC, we know from \cite[Corollary 2.2]{me_inhom} that $\overline{\dim}_\text{B} \pi_1(\mathcal{O}) = \max\{\overline{s}_1(C), s_1(F_\emptyset) \}$.  It follows that for all $\varepsilon>0$ we can find infinitely many $k \in \mathbb{N}$ such that
\begin{equation} \label{infiniteupper}
N_{(d/c)^k}\big(\pi_1(\mathcal{O})\big) \geq \big((d/c)^k\big)^{-(\max\{\overline{s}_1(C), s_1(F_\emptyset) \}-\varepsilon)}.
\end{equation}
Fix such a $k$, let $\varepsilon \in (0, \underline{s}_A)$, and $U$ be any closed square of side length $d^k$.  Since $\{S_\textbf{\emph{i}}\big([0,1]^2\big)\}_{\textbf{\emph{i}}\in \mathcal{I}^k}$ is a collection of pairwise disjoint open rectangles each with shortest side having length $d^k$ which is strictly less than the longer side, it is clear that $U$ can intersect no more than $6$ of the sets $\{ S_i(\mathcal{O})\}_{\textbf{\emph{i}}\in \mathcal{I}^k}$ since $S_i(\mathcal{O}) \subseteq S_\textbf{\emph{i}}\big([0,1]^2\big)$ for all $\textbf{\emph{i}}\in \mathcal{I}^k$.  It follows that, using the $\delta$-mesh definition of $N_\delta$, we have
\[
\sum_{\textbf{\emph{i}}\in \mathcal{I}^k} N_{d^k} \big(S_\textbf{\emph{i}}(\mathcal{O})\big) \  \leq  \ 6 \, N_{d^k} \Bigg(\bigcup_{\textbf{\emph{i}}\in \mathcal{I}^k} S_\textbf{\emph{i}}(\mathcal{O}) \Bigg)  \ \leq \  6 \, N_{d^k}(\mathcal{O}).
\]
This yields
\begin{eqnarray*}
N_{d^k}(\mathcal{O})& \geq & \tfrac{1}{6} \  \sum_{\textbf{\emph{i}} \in \mathcal{I}^k} N_{d^k} \big(S_\textbf{\emph{i}}(\mathcal{O})\big) \\ \\
& = & \tfrac{1}{6} \  \sum_{\textbf{\emph{i}} \in \mathcal{I}^k} N_{(d/c)^k} \big(\pi_1(\mathcal{O})\big) \qquad \qquad \text{since $\alpha_2(\textbf{\emph{i}}) = d^k$} \\ \\
&\geq & \tfrac{1}{6} \  \sum_{\textbf{\emph{i}} \in \mathcal{I}^k} \big((d/c)^k\big)^{-(\max\{\overline{s}_1(C), s_1(F_\emptyset) \}-\varepsilon)} \qquad \qquad \text{by (\ref{infiniteupper}) }\\ \\
& \geq & \tfrac{1}{6} \   ({d^k})^{-\overline{s}_A+\varepsilon} \ \Bigg(\sum_{\textbf{\emph{i}} \in \mathcal{I} } c^{\max\{\overline{s}_1(C), s_1(F_\emptyset) \}} d^{\overline{s}_A - \max\{\overline{s}_1(C), s_1(F_\emptyset) \}}\Bigg)^k\\ \\
& \geq & \tfrac{1}{6} \   ({d^k})^{-(\overline{s}_A-\varepsilon)}
\end{eqnarray*}
by the definition of $\overline{s}_A$, which proves that $\overline{\dim}_\text{B} F_C = \overline{\dim}_\text{B}\mathcal{O} \geq   \overline{s}_A - \varepsilon$ and letting $\varepsilon$ tend to zero gives the desired lower bound.
\end{proof}

We will now use Proposition \ref{bedmcprop} to prove the result in the general case.  The key idea is to approximate the IFS `from within' by subsystems which fall into the subclass used in Proposition \ref{bedmcprop}.  This approach is reminiscent of that used by Ferguson, Jordan and Shmerkin when studying projections of carpets \cite[Lemma 4.3]{ferguson_proj}.  There the authors prove that for all $\varepsilon>0$ any Lalley-Gatzouras or Bara\'nski system, $\mathbb{I}$, has a finite subsystem $\mathbb{J}_\varepsilon \subseteq \mathbb{I}^m$ (for some $m \in \mathbb{N}$), with the following properties: $\mathbb{J}_\varepsilon$ consists only of maps with linear part of the form
\[ \left( \begin{array}{cc}
c & 0\\
0 & d \end{array} \right)
\]
for some constants $c,d \in (0,1)$ depending on $\varepsilon$; the Hausdorff dimension of the attractor of $\mathbb{J}_\varepsilon$ is no more than $\varepsilon$ smaller than the Hausdorff dimension of the attractor of $\mathbb{I}$; and $\mathbb{J}_\varepsilon$ has uniform fibers (either vertical or horizontal, depending on the relative size of $c$ and $d$).  It is interesting to note that one cannot approximate the box and packing dimensions `from within' in the same way.  To see this observe that in the uniform fibers case the Hausdorff, box and packing dimensions coincide.  As such if these dimensions did not coincide in the original construction, then one cannot find subsystems for which they coincide but get arbitrarily close to the box dimension.  It is natural to ask if one can do this if the uniform fibers condition is dropped. We have been unable to show this and it seems that the problem is somehow linked to the fact that the packing dimension does not behave well with respect to fixing prescribed frequencies of maps in the IFS.  For examples of such bad behaviour, we note that for Bedford-McMullen carpets there does not usually exist a Bernoulli measure with full packing dimension and the packing spectrum of Bernoulli measures supported on self-affine carpets need not peak at the ambient packing dimension (Thomas Jordan, personal communication).  In contrast to this, there is always a Bernoulli measure with full Hausdorff dimension and the Hausdorff spectrum always peaks at the ambient Hausdorff dimension, \cite{king, jordanrams}.  Also, see the related work of Nielsen \cite{nielsen} on subsets of carpets consisting of points where the digits in the expansions occur with prescribed frequencies. Fortunately, for the purposes of this chapter, we do not need to approximate the box dimension from within, but rather approximate the quantities $\overline{s}_A$ and $\overline{s}_B$, which we \emph{can} do.
\begin{prop} \label{fromwithin}
Let $F_C$ be an inhomogeneous self-affine carpet in the Lalley-Gatzouras or Bara\'nski class and assume that $\overline{s}_1(C) \geq s_1(F_\emptyset)$.  Then for all $\varepsilon>0$, there exists a finite subsystem $\mathbb{J}_\varepsilon = \{S_\textbf{i}\}_{\textbf{i} \in \mathcal{J}_\varepsilon}$ for some $\mathcal{J}_\varepsilon \subseteq \mathcal{I}^m$ and $m \in \mathbb{N}$, with the property that for all $\textbf{i} \in \mathcal{J}_\varepsilon$ we have $c_\textbf{i} = c$, $d_\textbf{i} = d$ for some constants $c,d \in (0,1)$ depending on $\varepsilon$; and the number $\overline{s}_A$ defined by $\mathcal{J}_\varepsilon$ is no more than $\varepsilon$ smaller than the number $\overline{s}_A$ defined by $\mathcal{I}$.
\end{prop}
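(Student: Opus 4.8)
The plan is to realise $\mathcal{J}_\varepsilon$ as the collection of all words of a suitable length $m$ in which every symbol appears a prescribed number of times, choosing those prescribed frequencies to be the natural ``Gibbs'' probabilities attached to $\overline{s}_A$. Write $t=\overline{s}_1(C)$; the hypothesis $\overline{s}_1(C)\ge s_1(F_\emptyset)$ gives $\max\{s_1(F_\emptyset),\overline{s}_1(C)\}=t$, and since the attractor of any subsystem is contained in $F_\emptyset$ its first projection has box dimension at most $s_1(F_\emptyset)$, so in fact $\max\{s_1(F_\emptyset^{\mathcal{J}}),\overline{s}_1(C)\}=t$ for every subsystem $\mathcal{J}$ that we might produce. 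This is the observation that lets the definition of $\overline{s}_A$ ``transfer'' to subsystems with the same exponent $t$. Put $S=\overline{s}_A$ and define $p_i=c_i^{\,t}d_i^{\,S-t}$ for $i\in\mathcal{I}$; by the defining equation of $\overline{s}_A$ this is a strictly positive probability vector on $\mathcal{I}$.

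First I fix a large $m\in\mathbb{N}$ and nonnegative integers $n_i$ with $\sum_{i\in\mathcal{I}}n_i=m$ and $n_i/m\to p_i$ as $m\to\infty$ (take $n_i=\lfloor mp_i\rfloor$ for all but one index and let the last absorb the remainder). Set $\mathcal{J}_\varepsilon=\{\textbf{i}\in\mathcal{I}^m:\ \text{symbol }i\text{ occurs exactly }n_i\text{ times in }\textbf{i}\text{ for each }i\in\mathcal{I}\}$. Every $\textbf{i}\in\mathcal{J}_\varepsilon$ then has $c_\textbf{i}=\prod_i c_i^{n_i}=:c$ and $d_\textbf{i}=\prod_i d_i^{n_i}=:d$, so the uniform linear part requirement is met, while $\lvert\mathcal{J}_\varepsilon\rvert$ is the multinomial coefficient $m!/\prod_i n_i!$. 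Hence the value of $\overline{s}_A$ associated with $\mathcal{J}_\varepsilon$, which I call $S_m$, is the unique solution of $\lvert\mathcal{J}_\varepsilon\rvert\,c^{\,t}d^{\,S_m-t}=1$, i.e.
\[
S_m-t=\frac{\log\lvert\mathcal{J}_\varepsilon\rvert+t\log c}{-\log d}.
\]

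Now I evaluate the limit as $m\to\infty$. Dividing through by $m$ we have $\tfrac1m\log c=\sum_i\tfrac{n_i}{m}\log c_i\to\sum_i p_i\log c_i$, likewise $\tfrac1m\log d\to\sum_i p_i\log d_i$, and by Stirling's formula $\tfrac1m\log\lvert\mathcal{J}_\varepsilon\rvert\to-\sum_i p_i\log p_i=:H(p)$. Thus $S_m-t$ converges to $\big(H(p)+t\sum_i p_i\log c_i\big)\big/\big(-\sum_i p_i\log d_i\big)$. On the other hand the definition of $p$ gives $\log p_i=t\log c_i+(S-t)\log d_i$, whence $H(p)=-t\sum_i p_i\log c_i-(S-t)\sum_i p_i\log d_i$; substituting this makes the displayed ratio collapse to exactly $S-t$. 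Therefore $S_m\to S=\overline{s}_A$, and choosing $m$ large enough that $S_m>\overline{s}_A-\varepsilon$ completes the construction.

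The argument is essentially a counting exercise, and I expect no serious obstacle; the two points requiring care are (i) the remark that the relevant exponent $\max\{s_1(F_\emptyset^{\mathcal{J}}),\overline{s}_1(C)\}$ stays equal to $t$ when we pass to a subsystem, without which the algebraic identity for the limit would not close, and (ii) handling the rounding so that the frequencies satisfy $n_i/m\to p_i$ while $\sum_i n_i=m$ exactly. I would also record, since this is how the proposition will be used, that in the Lalley--Gatzouras case one has $c_i\ge d_i$ for every $i\in\mathcal{I}$, so that $c=\prod_i c_i^{n_i}\ge\prod_i d_i^{n_i}=d$ and $\mathbb{J}_\varepsilon$ falls into the subclass covered by Proposition \ref{bedmcprop}.
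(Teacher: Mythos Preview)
Your proof is correct and follows essentially the same route as the paper: define the Gibbs probabilities $p_i=c_i^{\,t}d_i^{\,\overline{s}_A-t}$, take the multinomial set of words with prescribed frequencies close to $p$, and use Stirling's formula to show that the resulting $\overline{s}_A$ converges to the original. The only cosmetic difference is that the paper takes $m(k)=\sum_i\lfloor p_ik\rfloor$ rather than forcing $\sum_i n_i=m$ exactly, but this changes nothing in the limit.
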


\begin{proof}
We will use a version of Stirling's approximation for the logarithm of large factorials.  This states that for all $n \in \mathbb{N} \setminus \{1\}$ we have
\begin{equation} \label{stirling}
n \log n - n  \ \leq \ \log n! \ \leq \  n \log n - n +\log n.
\end{equation}
For $i \in \mathcal{I}$, let
\[
p_i = c_i^{\overline{s}_1(C)} d_i^{\overline{s}_A-\overline{s}_1(C)}
\]
and for $k \in \mathbb{N}$, let
\[
m(k) = \sum_{i \in \mathcal{I}} \lfloor p_ik \rfloor \in \mathbb{N}
\]
and note that $k-\lvert \mathcal{I} \rvert \leq m(k) \leq k$.  Consider the $m(k)$th iteration of $\mathcal{I}$ and let
\[
\mathcal{J}_k = \Big\{ \textbf{\emph{j}} = (j_1, \dots, j_{m(k)}) \in \mathcal{I}^{m(k)} : \#\{n : j_n = i\} = \lfloor p_i k \rfloor \Big\}.
\]
It is straightforward to see that %multinomial coefficient
\begin{equation} \label{combina}
\lvert \mathcal{J}_k \rvert = \frac{m(k)!}{\prod_{i \in \mathcal{I}} \lfloor p_ik \rfloor !}
\end{equation}
and for each $\textbf{\emph{j}} \in \mathcal{J}_k$ we have
\[
c_\textbf{\emph{j}} = \prod_{i \in \mathcal{I}} c_i^{\lfloor p_ik \rfloor} =: c
\]
and
\[
d_\textbf{\emph{j}} = \prod_{i \in \mathcal{I}} d_i^{\lfloor p_ik \rfloor} =:  d.
\]
Indeed, these facts were observed in \cite{ferguson_proj}.  We can now use this information to estimate the number $\overline{s}_A$ corresponding to $\mathcal{J}_k$, which we will denote by $\overline{s}_A(\mathcal{J}_k)$ to differentiate it from the number $\overline{s}_A$ corresponding to $\mathcal{I}$, which we will denote by $\overline{s}_A(\mathcal{I})$.  Since $\mathcal{J}_k$ is a subsystem of $\mathcal{I}$ and since $\overline{s}_1(C) \geq s_1(F_\emptyset)$, it follows by definition that
\begin{eqnarray*}
\overline{s}_A(\mathcal{I}) \ \geq \ \overline{s}_A(\mathcal{J}_k) & = & \frac{\log \lvert \mathcal{J}_k \rvert }{-\log d } \ +  \ \overline{s}_1(C) \bigg(1-\frac{\log c}{\log d } \bigg) \\ \\
& = & \frac{\log m(k)! - \sum_{i \in \mathcal{I}} \log  \lfloor p_ik \rfloor!}{-\log d } \ +  \ \overline{s}_1(C) \bigg(1-\frac{\log c}{\log d } \bigg) \qquad \qquad \text{by (\ref{combina})}\\ \\
& \geq &  \frac{m(k)\log m(k) - m(k)  - \sum_{i \in \mathcal{I}}\Big( \lfloor p_ik \rfloor \log  \lfloor p_ik \rfloor - \lfloor p_ik \rfloor + \log \lfloor p_ik \rfloor \Big)}{-\log d } \\ \\
&\,& \qquad  \qquad \quad +  \ \overline{s}_1(C) \bigg(1-\frac{\log c}{\log d } \bigg)  \qquad \qquad \text{by Stirling's approximation (\ref{stirling})}\\ \\
& = &  \frac{m(k)\log m(k) - \sum_{i \in \mathcal{I}} \lfloor p_ik \rfloor \log  \lfloor p_ik \rfloor }{-\log d }  \ +  \ \overline{s}_1(C) \bigg(1-\frac{\log c}{\log d } \bigg) \\ \\
&\,& \qquad  \qquad \qquad \qquad + \  \frac{ \sum_{i \in \mathcal{I}}\log \lfloor p_ik \rfloor }{\log d }  \\ \\
& \geq &  \frac{m(k) \log m(k) - \sum_{i \in \mathcal{I}} \lfloor p_ik \rfloor \log  k c_i^{\overline{s}_1(C)} d_i^{\overline{s}_A(\mathcal{I})-\overline{s}_1(C)} }{-\log d }  \ +  \ \overline{s}_1(C) \bigg(1-\frac{\log c}{\log d } \bigg) \\ \\
&\,& \qquad  \qquad \qquad \qquad + \  \frac{ \sum_{i \in \mathcal{I}}\log \lfloor p_ik \rfloor }{\log d }  \\ \\
& \geq &  \frac{- \sum_{i \in \mathcal{I}} \lfloor p_ik \rfloor \log   c_i^{\overline{s}_1(C)} d_i^{\overline{s}_A(\mathcal{I})-\overline{s}_1(C)} }{-\log d }  \ +  \ \overline{s}_1(C) \bigg(1-\frac{\log c}{\log d } \bigg) \\ \\
&\,& \qquad  \qquad \qquad \qquad + \  \frac{ \sum_{i \in \mathcal{I}}\log \lfloor p_ik \rfloor -m(k)\log(m(k)/k)}{\log d }  \\ \\
& = & \overline{s}_1(C) \frac{- \sum_{i \in \mathcal{I}} \lfloor p_ik \rfloor \log   c_i  }{-\log d }  \ +  \ \Big(\overline{s}_A(\mathcal{I})-\overline{s}_1(C) \Big) \frac{- \sum_{i \in \mathcal{I}} \lfloor p_ik \rfloor \log  d_i}{-\log d }  \\ \\
&\,& \qquad  \qquad \qquad    + \ \overline{s}_1(C) \bigg(1-\frac{\log c}{\log d } \bigg)  \ + \  \frac{ \sum_{i \in \mathcal{I}}\log \lfloor p_ik \rfloor -m(k)\log(m(k)/k)}{\log d }  \\ \\
& =& \overline{s}_1(C) \frac{\log c  }{\log d }  \ +  \ \Big(\overline{s}_A(\mathcal{I})-\overline{s}_1(C) \Big) \ + \ \overline{s}_1(C) \bigg(1-\frac{\log c}{\log d } \bigg)  \\ \\
&\,& \qquad  \qquad \qquad \qquad + \ \frac{ \sum_{i \in \mathcal{I}}\log \lfloor p_ik \rfloor -m(k)\log(m(k)/k)}{\log d }  \\ \\
& = & \overline{s}_A(\mathcal{I}) \  + \ \frac{ \sum_{i \in \mathcal{I}}\log \lfloor p_ik \rfloor -m(k)\log(m(k)/k)}{\log d } \\ \\
&\to&   \overline{s}_A(\mathcal{I})
\end{eqnarray*}
as $k \to \infty$.  It follows that for any $\varepsilon>0$, we can choose $k$ large enough to ensure that the IFS $\mathbb{J}_k = \{S_\textbf{\emph{i}}\}_{\textbf{\emph{i}} \in \mathcal{J}_k}$ satisfies the properties required by $\mathbb{J}_\varepsilon$, which completes the proof.
\end{proof}

We can now complete the proof of the lower bound for the upper box dimension in Theorem \ref{main4}.  We will prove this in the case when $\overline{s}_A \geq \overline{s}_B$.  The other case can clearly be shown by a symmetric argument.

\begin{proof}
We wish to show that $\overline{\dim}_\text{B} F_C  \geq   \max\{ \overline{s}_A,  \overline{\dim}_\text{B} C \}$.  If $\overline{s}_1(C) \leq s_1(F_\emptyset)$, then the result follows by the monotonicity of upper box dimension since in this case $\overline{s}_A \leq \overline{\dim}_\text{B} F$.  If $\overline{s}_1(C) > s_1(F_\emptyset)$, then we may apply Propositions \ref{bedmcprop}-\ref{fromwithin} in the following way.  Let $\varepsilon>0$.  Then by Proposition \ref{fromwithin} there exists a subsystem $\mathcal{J}_\varepsilon$ of the type considered in Proposition \ref{bedmcprop} for which the number $\overline{s}_A = \overline{s}_A(\mathcal{J}_\varepsilon)$ defined by the system $\mathcal{J}_\varepsilon$ is no more than $\varepsilon$ smaller than the number $\overline{s}_A = \overline{s}_A (\mathcal{I})$ defined for the original system $\mathcal{I}$.  Writing $F_C(\mathcal{J}_\varepsilon)$ for the attractor of the IFS corresponding to $\mathcal{J}_\varepsilon$, it follows from Proposition \ref{bedmcprop} that
\[
\overline{\dim}_\text{B} F_C \ \geq \ \overline{\dim}_\text{B} F_C (\mathcal{J}_\varepsilon) \ \geq \ \overline{s}_A(\mathcal{J}_\varepsilon) \ \geq \ \overline{s}_A(\mathcal{I}) - \varepsilon
\]
and letting $\varepsilon$ tend to zero completes the proof.
\end{proof}

\begin{centering}

\textbf{Acknowledgements}

The author was supported by the EPSRC grant EP/J013560/1.  Much of this work was completed whilst the author was an EPSRC funded PhD student at the University of St Andrews and he wishes to express his gratitude for the support he found there.  The author would also like to thank Tuomas Orponen for suggesting the construction of $C$ in Section \ref{example2} and Thomas Jordan for useful discussions.

\end{centering}


\begin{thebibliography}{99}












\bibitem[B]{baranski}
K.~Bara\'nski.
Hausdorff dimension of the limit sets of some planar geometric constructions,
{\em Adv. Math.}, {\bf 210}, (2007), 215--245.





\bibitem[Ba]{superfractals}
M. F. Barnsley.
{\em Superfractals},
Cambridge University Press, Cambridge, 2006.

\bibitem[BD]{barndemko}
M. F. Barnsley and S. Demko.
Iterated function systems and the global construction of fractals,
 {\em Proc. R. Soc. Lond. Ser. A}, {\bf 399}, (1985), 243--275.






\bibitem[Be]{bedford}
T. Bedford.
 Crinkly curves, Markov partitions and box dimensions in self-similar sets,
 {\em Ph.D dissertation, University of Warwick}, (1984).



\bibitem[F1]{falconer}
K.~J. Falconer.
{\em Fractal Geometry: Mathematical Foundations and Applications},
John Wiley, 2nd Ed., 2003.

\bibitem[F2]{affinesurvey}
K.~J. Falconer.
Dimensions of Self-affine Sets - A Survey, \emph{Further Developments in Fractals and Related Fields}, Birkh\"auser, Boston, 2013, 115--134.



\bibitem[FeW]{fengaffine}
D.-J. Feng and Y. Wang.
A class of self-affine sets and self-affine measures,
{\em J. Fourier Anal. Appl.}, {\bf11}, (2005), 107--124.




\bibitem[FJS]{ferguson_proj}
A. Ferguson, T. Jordan, P. Shmerkin.
The Hausdorff dimension of the projections of self-affine carpets,
\emph{Fund. Math.}, {\bf209}, (2010), 193--213.



\bibitem[Fr1]{me_box}
J. M. Fraser.
On the packing dimension of box-like self-affine sets in the plane,
\emph{Nonlinearity}, {\bf25}, (2012), 2075--2092.

\bibitem[Fr2]{me_inhom}
J. M. Fraser.
Inhomogeneous self-similar sets and box dimensions,
\emph{Studia Math.}, {\bf 213}, (2012), 133--156.


\bibitem[Fr3]{me_assouad}
J. M. Fraser.
Assouad type dimensions and homogeneity of fractals,
\emph{to appear in Trans. Amer. Maths. Soc.}







\bibitem[GL]{lalley-gatz}
D. Gatzouras and S.~P. Lalley.
Hausdorff and box dimensions of certain self-affine fractals,
 {\em Indiana Univ. Math. J.}, {\bf 41}, (1992), 533--568.

\bibitem[H]{hutchinson}
J.~E. Hutchinson.
 Fractals and self-similarity,
{\em Indiana Univ. Math. J.}, {\bf 30}, (1981), 713--747.

\bibitem[JR]{jordanrams}
T. Jordan and M. Rams.
Multifractal analysis for Bedford-McMullen carpets,
\emph{Math. Proc. Cambridge Philos. Soc.}, {\bf 150}, (2011), 147--156.

\bibitem[K]{king}
J. F. King.
The singularity spectrum for general Sierpi\'nski carpets,
\emph{Adv. Math.}, {\bf 116}, (1995), 1--11.

\bibitem[L]{luk}
J. Luukkainen.
Assouad dimension: antifractal metrization, porous sets, and homogeneous measures,
\emph{J. Korean Math. Soc.}, {\bf 35}, (1998), 23--76.



\bibitem[M]{mackay}
J. M. Mackay.
Assouad dimension of self-affine carpets,
\emph{Conform. Geom. Dyn.} {\bf 15}, (2011), 177--187.




\bibitem[Mc]{mcmullen}
C. McMullen.
 The Hausdorff dimension of general Sierpi\'nski carpets,
 {\em Nagoya Math. J.}, {\bf 96}, (1984), 1--9.


\bibitem[N]{nielsen}
O. A. Nielsen.
The Hausdorff and packing dimensions of some sets related to Sierpi\'nski carpets,
 {\em Canad. J. Math.}, {\bf 51}, (1999), 1073--1088.


\bibitem[OS]{olseninhom}
L. Olsen and N. Snigireva.
$L^q$ spectra and R\'enyi dimensions of in-homogeneous self-similar measures,
{\em Nonlinearity}, {\bf 20}, (2007), 151--175.


\bibitem[OS2]{olseninhom2}
L. Olsen and N. Snigireva.
Multifractal spectra of in-homogenous self-similar measures,
{\em Indiana Univ. Math. J.}, {\bf 57}, (2008), 1789--1843.






\bibitem[S]{ninaphd}
N. Snigireva.
Inhomogeneous self-similar sets and measures,
\emph{PhD Dissertation}, University of St Andrews, (2008).











\end{thebibliography}
\end{document}